\documentclass[12pt]{article}
\pdfoutput=1
\usepackage{fullpage}

\usepackage{graphicx}
\usepackage{tikz}
\usepackage{caption}
\usepackage{hyperref}
\usepackage{subcaption}
\usepackage[T1]{fontenc}
\captionsetup[subfigure]{labelfont=rm}

\usepackage{tikz}
\usetikzlibrary{positioning}
\usepackage{pgfplots}
\pgfplotsset{compat=1.5.1}
\usepgfplotslibrary{dateplot}
\usepgfplotslibrary{fillbetween}
\usetikzlibrary{patterns}
\usepackage{color}
\colorlet{mygreen}{black!50!green!60}
\colorlet{myblue}{black!30!blue}

\newcommand{\open}[1]{]#1[}
\newcommand{\leftopen}[1]{]#1]}
\newcommand{\rightopen}[1]{[#1[}

\newcommand{\lzerovalue}{l}
\newcommand{\support}{L}
\newcommand{\dualsupport}{I}
\newcommand{\OriginalSupportNorm}[2]{{#1}_{(#2)}^{\mathrm{sn}}}
\newtheorem{fact}{Fact}


\usepackage[square,numbers]{natbib}
\usepackage{amsfonts}
\usepackage{amsmath}
\usepackage{amssymb}
\usepackage{fullpage}
\graphicspath{{Figures/}}
\def\keywordsname{Keywords}
\def\mykeywords{\par\addvspace{17pt}\small\rmfamily
  \trivlist\if!\keywordsname!\item[]\else\item[\hskip\labelsep
  {\bfseries\keywordsname}]\fi}

\newenvironment{keywords}{\begin{mykeywords}}{\end{mykeywords}}

\usepackage{stmaryrd} 
\usepackage{wasysym} 
\usepackage[colors]{optsys}
\renewcommand{\primalbis}{\primal'}
\renewcommand{\PRIMAL}{\RR^d}
\renewcommand{\DUAL}{\RR^d}

\newcommand{\Lzero}{\mathcal{L}_0}

\renewcommand{\Capra}{Capra}


\title{The \Capra-subdifferential of the $\lzero$ pseudonorm}

\author{Adrien~Le~Franc\thanks{CERMICS, \'{E}cole des Ponts ParisTech, France.}
  \and
  Jean-Philippe~Chancelier\footnotemark[1]\and
  Michel~De~Lara\footnotemark[1]
}

\begin{document}
\maketitle
\begin{abstract}
  The $\lzero$ pseudonorm counts the nonzero coordinates of a vector.
  It is often used in optimization problems to enforce the sparsity of the solution.
  However, this function is nonconvex and noncontinuous,
  and optimization problems formulated with $\lzero$ --- be it in the objective
  function or in the constraints --- are hard to solve in general.
  Recently, a new family of coupling functions --- called \Capra\ (constant
  along primal rays) --- has proved to induce relevant generalized Fenchel-Moreau conjugacies
  to handle the $\lzero$ pseudonorm.
  In particular, under a suitable choice of source norm on~$\RR^d$ ---
  used in the definition of the \Capra\ coupling --- 
  the function~$\lzero$ has nonempty \Capra-subdifferential, hence is
  \Capra-convex. 		In this article, we give explicit formulations
  for the \Capra-subdifferential of the~$\lzero$ pseudonorm, when the source norm
  is a $\ell_p$ norm with $p \in \nc{1,\infty}$.
  We illustrate our results with graphical visualizations of the \Capra-subdifferential
  of $\lzero$ for the Euclidean source norm.
\end{abstract}

\begin{keywords}
  Generalized subdifferential; $\lzero$ pseudonorm; Sparsity ; \Capra-coupling
\end{keywords}

\section{Introduction}
  

The $\lzero$ pseudonorm is a function which counts 
the number of nonzero elements of a vector. 
This function appears in numerous optimization problems 
to enforce the sparsity of the solution.
As this function is nonconvex and noncontinuous, 
the powerful framework of convex analysis is unadapted
to address such problems, unless considering a convex relaxation of the function $\lzero$.
In a recent series of works
\cite{Chancelier-DeLara:2021_ECAPRA_JCA,Chancelier-DeLara:2022_CAPRA_OPTIMIZATION,Chancelier-DeLara:2021_SVVA}, 
it was shown that conjugacies
induced by the so-called \Capra\ (constant along primal rays) coupling
are well-suited to handle the $\lzero$ pseudonorm.
In particular, the authors show in \cite{Chancelier-DeLara:2021_SVVA}
that --- for a large class of source norms 
(that encompasses the $\ell_p$ norms for $p \in \open{1, \infty}$)
employed in the definition of the \Capra\ coupling ---
the $\lzero$ pseudonorm is equal to its \Capra-biconjugate,
meaning that it is a \Capra-convex function.
They also provide formulae for the \Capra-subdifferential
of $\lzero$ in \cite{Chancelier-DeLara:2022_CAPRA_OPTIMIZATION},
and prove that this subdifferential is nonempty for the same class
of source norms that guarantee the \Capra-convexity of $\lzero$
in \cite{Chancelier-DeLara:2021_SVVA}.

The formulation of the \Capra-subdifferential 
of the $\lzero$ pseudonorm in \cite{Chancelier-DeLara:2022_CAPRA_OPTIMIZATION}
involves the so-called coordinate-$k$ and dual coordinate-$k$ norms,
defined by variational expressions,
and is not readily computable.
The main contribution of this article is to derive explicit
formulations to compute the \Capra-subdifferential 
of the $\lzero$ pseudonorm for all $\ell_p$ source norms with $p \in \nc{1, \infty}$.
Subsequently, we comment on the domain of these subdifferentials,
and extend previous results by showing that, in the extreme cases where $p \in \na{1, \infty}$,
the $\lzero$ pseudonorm is not \Capra-convex.
We also provide graphical illustrations of the \Capra-subdifferential
of~$\lzero$, 
and compare it with other notions of generalized subdifferentials
for~$\lzero$ found in \cite{Le:2013}.
With the \Capra-subdifferential, we can naturally derive
``polyhedral-like''~\cite[p.~114]{Singer:1997} lower bounds
for the $\lzero$ pseudonorm, that is, lower bounds that are the maximum of a
finite number of so-called \emph{\Capra-affine} functions. 

The paper is organized as follows.
First, we recall background notions on \Capra-couplings
in \S\ref{sec:background_on_capra}. Second, we derive explicit formulations
for the \Capra-subdifferential of $\lzero$ in \S\ref{sec:capra_subdifferential_l0}.
Finally, we provide illustrative visualizations and discuss 
the positioning of the \Capra-subdifferential of $\lzero$
with respect to other notions of subdifferentials in \S\ref{sec:visualization_and_discussion}.

\section{Background on the \Capra\ coupling and the $\lzero$ pseudonorm}
\label{sec:background_on_capra}

For any pair of integers $i \leq j$, we denote
\( \ic{i,j}=\na{i,i+1, \dots, j-1,j} \). 
We work on the Euclidean space $\PRIMAL$, where $d \in \NN^*$,
equipped with the canonical scalar product $\proscal{\cdot}{\cdot}$,
and with a norm $\TripleNorm{\cdot}$ that we call the \emph{source norm}.
We stress the point that $\TripleNorm{\cdot}$ can be \emph{any} norm,
and is not required to be the Euclidean norm.
We denote the unit sphere and the unit ball 
of the norm~$\TripleNorm{\cdot}$ by, respectively,
\begin{equation}
  \label{eq:triplenorm_unit_sphere_ball}
  \TripleNormSphere
  = 
  \defset{\primal \in \RR^d}{\TripleNorm{\primal} = 1} 
  \quad \text{ and } \quad
  \TripleNormBall 
  = 
  \defset{\primal \in \RR^d}{\TripleNorm{\primal} \leq 1} 
  \eqfinv
\end{equation}
or, more explicitly,	by $\SPHERE_{\TripleNorm{\cdot}}$
and $\BALL_{\TripleNorm{\cdot}}$ when needed.

First, we recall the definition of the so-called \Capra\ coupling
and of the resulting \Capra\ conjugacy in~\S\ref{sec:capra_conjugacies}.
Second, we review the main results relating \Capra\ conjugacies
and the $\lzero$ pseudonorm in~\S\ref{sec:capra_convexity_of_l0}.

\subsection{\Capra\  conjugacies}
\label{sec:capra_conjugacies}

We start by recalling the definition of the \Capra\ coupling.

\begin{definition}[\cite{Chancelier-DeLara:2022_CAPRA_OPTIMIZATION}, Definition~4.1]
  Let $\TripleNorm{\cdot}$ be a norm on~$\RR^d$.
  We define the \emph{coupling}~$\CouplingCapra : \PRIMAL \times \DUAL \to \RR$
  between $\RR^d$ and $\RR^d$,
  that we call the \Capra~coupling, by
  \begin{equation}
    \forall \dual \in \RR^d \eqsepv 
    \CouplingCapra\np{\primal, \dual} =
    \begin{cases}
      \frac{ \proscal{\primal}{\dual} }{ \TripleNorm{\primal} }
      \eqsepv &\text{ if } \primal \neq 0 \eqfinv
      \\
      0 \eqsepv &\text{ if } \primal = 0 \eqfinp
    \end{cases}
    \label{eq:coupling_CAPRA}
  \end{equation}
  \label{de:Capra}%
\end{definition}

A coupling function such as the \Capra~coupling $\CouplingCapra$ given in Definition~\ref{de:Capra}
gives rise to generalized Fenchel-Moreau conjugacies \cite{Singer:1997, Martinez-Legaz:2005},
that we briefly recall.
Let us introduce the extended real line $\barRR = \RR \cup \na{+\infty, -\infty}$ 
and consider a function $\fonctionprimal : \PRIMAL \to \barRR$.
The $\CouplingCapra$-Fenchel-Moreau conjugate of $\fonctionprimal$
is the function $\SFM{ \fonctionprimal }{\CouplingCapra} : \DUAL \to \barRR$
defined by
\begin{subequations}
  \label{eq:Capra_conjugacies}%
  \begin{equation}
    \SFM{ \fonctionprimal }{\CouplingCapra}(\dual)
    = 
    \sup_{\primal \in \PRIMAL} \Bp{ \CouplingCapra\np{\primal,\dual} 
      -\fonctionprimal\np{\primal}  } 
    \eqsepv \forall \dual \in \DUAL
    \eqfinv
    \label{eq:Fenchel-Moreau_conjugate}
  \end{equation}
  and the $\CouplingCapra$-Fenchel-Moreau biconjugate of $\fonctionprimal$
  is the function $\SFMbi{ \fonctionprimal }{\CouplingCapra} : \PRIMAL \to \barRR$
  defined by
  \begin{equation}
    \SFMbi{ \fonctionprimal }{\CouplingCapra}(\primal)
    = 
    \sup_{\dual \in \DUAL} \Bp{ \CouplingCapra\np{\primal,\dual} 
      -\SFM{ \fonctionprimal }{\CouplingCapra}(\dual)  } 
    \eqsepv \forall \primal \in \PRIMAL
    \eqfinp
    \label{eq:Fenchel-Moreau_biconjugate}
  \end{equation}
  Moreover, we have the inequality
  \begin{equation}
    \SFMbi{ \fonctionprimal }{\CouplingCapra}(\primal)
    \leq \fonctionprimal(\primal)
    \eqsepv \forall \primal \in \PRIMAL \eqfinv
    \label{eq:biconjugate_inequality}
  \end{equation}
\end{subequations}
and following \cite{Martinez-Legaz:2005}, we say that the function $\fonctionprimal$
is \Capra-convex \IFF we have an equality in~\eqref{eq:biconjugate_inequality}.
Lastly, \Capra\ conjugacies also induce a notion of \Capra-subdifferential.
The \Capra-subdifferential of $\fonctionprimal$ is the
set-valued mapping
$\partial_{\CouplingCapra} \fonctionprimal : \PRIMAL \rightrightarrows \DUAL$ defined by
\begin{subequations}
  \label{eq:Capra_subdifferential_notions}%
  \begin{equation}
    \dual \in \partial_{\CouplingCapra}\fonctionprimal\np{\primal} 
    \iff
    \SFM{\fonctionprimal}{\CouplingCapra}\np{\dual} =
    \CouplingCapra\np{\primal,\dual} -\fonctionprimal\np{\primal} 
    \eqfinv
    \label{eq:Capra_subdifferential}%
  \end{equation}
  and which takes closed and convex set values~\cite[Proposition~1]{Chancelier-DeLara:2021_SVVA}.
  We say that \cite[Definition~10.1]{Singer:1997}
  the function $\fonctionprimal$ is \emph{\Capra-subdifferentiable
  at $\primal \in \PRIMAL$} when $\partial_{\CouplingCapra}\fonctionprimal\np{\primal} \neq \emptyset$,
  and we introduce the domain of 
  $\partial_{\CouplingCapra} \fonctionprimal$,
  defined as the set
  \begin{equation}
    \dom\bp{\partial_{\CouplingCapra} \fonctionprimal} = 
    \defset{\primal \in \PRIMAL}{\partial_{\CouplingCapra} \fonctionprimal(\primal) \neq \emptyset}
    \eqfinp
  \end{equation}
\end{subequations}

Observe that if we replace the \Capra\ coupling $\CouplingCapra$
with the scalar product $\proscal{\cdot}{\cdot}$ 
in \eqref{eq:Capra_conjugacies} and \eqref{eq:Capra_subdifferential_notions},
we retrieve well-known notions of standard convex analysis.
We refer to \cite{Chancelier-DeLara:2022_CAPRA_OPTIMIZATION}
for a more complete introduction to \Capra\ conjugacies.

\subsection{\Capra-convexity and \Capra-subdifferentiability of the $\lzero$ pseu\-do\-norm}
\label{sec:capra_convexity_of_l0}

We define the \emph{support} of a vector $\primal \in \PRIMAL$ by
$\SupportMapping(\primal) = \bset{ j \in \na{1,\ldots,d} }{\primal_j \not= 0 }$.
The \emph{\lzeropseudonorm} is the function
\( \lzero : \RR^d \to \na{0,1,\ldots,d} \)
defined~by 
\begin{equation}
  \lzero\np{\primal} =
  \bcardinal{ \SupportMapping(\primal) }
  \eqsepv \forall \primal \in \RR^d
  \eqfinv
  \label{eq:pseudo_norm_l0}  
\end{equation}
where $\cardinal{\IndexSubset}$ denotes the cardinality of 
a subset \( \IndexSubset \subseteq \na{1,\ldots,d} \).
We recall the main results relating
the \Capra\ coupling $\CouplingCapra$ of Definition~\ref{de:Capra} and the $\lzero$ pseudonorm.
To ease the reading, we gather the required background notions on norms in Appendix~\ref{app:background_on_norms}.

First, we recall that, under a suitable choice of source norm,
the $\lzero$ pseudonorm is \Capra-subdifferentiable everywhere on $\RR^d$,
hence is a \Capra-convex function.
\begin{theorem}[from \cite{Chancelier-DeLara:2021_SVVA}, Theorem~1 and Proposition~2]
  \label{th:capra_convexity}%
  Let $\TripleNorm{\cdot}$ be the source norm
  employed for the \Capra\ coupling $\CouplingCapra$
  in Definition~\ref{de:Capra}.
  If both the norm $\TripleNorm{\cdot}$ and the dual norm $\TripleNormDual{\cdot}$
  are orthant-strictly monotonic (see Definition~\ref{de:orthant-monotonic}),
  then we have that
  \begin{equation*}
    \partial_{\CouplingCapra} \lzero(\primal) 
    \neq \emptyset \eqsepv \forall \primal \in \RR^d
    \eqfinp
  \end{equation*}
  As a consequence, we have that         
  \begin{equation*}
    \SFMbi{ \lzero }{\CouplingCapra} 
    = \lzero 
    \eqfinp
  \end{equation*}
\end{theorem}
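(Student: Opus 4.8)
The plan is to establish the two assertions of Theorem~\ref{th:capra_convexity} in sequence, deriving the biconjugate equality as a consequence of the nonemptiness of the \Capra-subdifferential. The logical backbone is the general fact, valid for any coupling, that if $\partial_{\CouplingCapra}\lzero(\primal) \neq \emptyset$ then equality holds in the biconjugate inequality~\eqref{eq:biconjugate_inequality} at $\primal$; this reduces the whole theorem to proving the subdifferentiability claim, since \Capra-convexity of $\lzero$ is exactly the everywhere equality $\SFMbi{\lzero}{\CouplingCapra} = \lzero$. I would first record this reduction explicitly: unravelling the definition~\eqref{eq:Capra_subdifferential}, a subgradient $\dual \in \partial_{\CouplingCapra}\lzero(\primal)$ satisfies $\SFM{\lzero}{\CouplingCapra}(\dual) = \CouplingCapra(\primal,\dual) - \lzero(\primal)$, and plugging this into~\eqref{eq:Fenchel-Moreau_biconjugate} gives $\SFMbi{\lzero}{\CouplingCapra}(\primal) \geq \CouplingCapra(\primal,\dual) - \SFM{\lzero}{\CouplingCapra}(\dual) = \lzero(\primal)$, which combined with~\eqref{eq:biconjugate_inequality} forces equality.

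The substantive work is therefore to exhibit, for each $\primal \in \RR^d$, an explicit dual vector $\dual$ witnessing $\dual \in \partial_{\CouplingCapra}\lzero(\primal)$. First I would handle the trivial case $\primal = 0$, where $\lzero(0) = 0$ and $\CouplingCapra(0,\dual) = 0$, so the subdifferential condition amounts to $\SFM{\lzero}{\CouplingCapra}(\dual) = 0$; I expect $\dual = 0$ to work here, and I would verify $\SFM{\lzero}{\CouplingCapra}(0) = \sup_{\primal}(-\lzero(\primal)) = 0$ directly. For nonzero $\primal$, the strategy is to exploit the structure of the \Capra\ coupling, which is constant along rays, so that $\lzero$ and $\CouplingCapra$ both factor through the projection onto the unit sphere $\TripleNormSphere$. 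The candidate subgradient should be built from the support $\SupportMapping(\primal)$ of $\primal$ together with the normalization $\primal/\TripleNorm{\primal}$, and the monotonicity hypotheses on $\TripleNorm{\cdot}$ and $\TripleNormDual{\cdot}$ are the tool that guarantees this candidate achieves the supremum defining the conjugate. I would reach into the background on coordinate-$k$ and dual coordinate-$k$ norms to phrase the candidate precisely.

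The main obstacle, and the crux of the proof, is verifying that the chosen $\dual$ actually attains the supremum in the definition~\eqref{eq:Fenchel-Moreau_conjugate} of $\SFM{\lzero}{\CouplingCapra}(\dual)$, equivalently that the inequality $\CouplingCapra(\primalbis,\dual) - \lzero(\primalbis) \leq \CouplingCapra(\primal,\dual) - \lzero(\primal)$ holds for all $\primalbis \in \RR^d$ with equality at $\primalbis = \primal$. This is where the orthant-strict monotonicity of both norms enters decisively: it controls how $\CouplingCapra(\primalbis,\dual) = \proscal{\primalbis}{\dual}/\TripleNorm{\primalbis}$ behaves as the support of $\primalbis$ varies, ensuring that enlarging the support cannot increase the net gain $\CouplingCapra(\primalbis,\dual) - \lzero(\primalbis)$ beyond the value at $\primal$. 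I anticipate that strict monotonicity is precisely what rules out degenerate maximizers on lower-dimensional faces and secures the strict separation needed for the supremum to be attained exactly at the intended point.

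Since this statement is quoted directly from \cite{Chancelier-DeLara:2021_SVVA}, I would not reprove the monotonicity-to-subdifferentiability implication from scratch but rather invoke Theorem~1 and Proposition~2 therein, presenting the above as the conceptual outline of why the result holds and flagging that the delicate attainment argument resides in the cited source.
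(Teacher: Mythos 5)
Your proposal is correct and takes essentially the same approach as the paper: the paper states this theorem purely as background recalled from \cite{Chancelier-DeLara:2021_SVVA} (Theorem~1 and Proposition~2) without reproving the substantive attainment argument for $\primal \neq 0$, exactly as you ultimately do. Your added reduction --- that $\partial_{\CouplingCapra}\lzero(\primal) \neq \emptyset$ forces $\SFMbi{ \lzero }{\CouplingCapra}(\primal) = \lzero(\primal)$, which is legitimate here since both $\CouplingCapra$ and $\lzero$ are finite-valued so no $\infty-\infty$ ambiguity arises --- is sound and is precisely the contrapositive of the general principle the paper records later as Fact~\ref{fact:domain}.
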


Second, a generic formula for the \Capra-subdifferential
of $\lzero$ is given in \cite{Chancelier-DeLara:2022_CAPRA_OPTIMIZATION}.
To state this last result, we introduce the sets
\begin{equation}
  \Dual_{\lzerovalue} = \bset{\dual \in \DUAL}{\lzerovalue \in
    \argmax_{j\in\ic{0,d}} \bp{\CoordinateNormDual{\TripleNorm{\dual}}{j} - j}} 
  \eqsepv
  \forall \lzerovalue \in \ic{0, d} \eqfinv
  \label{eq:admissible_dual}
\end{equation}
where $\na{\CoordinateNormDual{\TripleNorm{\cdot}}{j}}_{j \in \ic{1:d}}$
are the dual coordinate-$k$ norms associated with the source norm $\TripleNorm{\cdot}$, 
whose expressions are given in Definition~\ref{de:coordinate_norm}.
Also, for a nonempty closed convex set $\Convex \subseteq \DUAL$
and $\primal \in \PRIMAL$, we denote by
$\NormalCone_\Convex(\primal)$ the \emph{normal cone of~$\Convex$}
at $\primal$,
whose definition~\eqref{eq:normal_cone} and properties are recalled in Appendix~\ref{app:background_on_norms}.

\begin{proposition}[from \cite{Chancelier-DeLara:2022_CAPRA_OPTIMIZATION}, Proposition~4.7
  and \cite{Chancelier-DeLara:2021_SVVA}, Proposition~1]
  \label{pr:capra_subdifferential_any_norm}%
  Let $\TripleNorm{\cdot}$ be the source norm
  employed for the \Capra\ coupling $\CouplingCapra$
  in Definition~\ref{de:Capra}.
  Let $\na{\CoordinateNorm{\TripleNorm{\cdot}}{j}}_{j \in \ic{1:d}}$
  and $\na{\CoordinateNormDual{\TripleNorm{\cdot}}{j}}_{j \in \ic{1:d}}$
  be the associated sequences
  of coordinate-k and dual coordinate-k norms, as in Definition~\ref{de:coordinate_norm},
  and let $\na{\CoordinateNorm{\BALL}{j}}_{j \in \ic{1:d}}$
  and $\na{\CoordinateNormDual{\BALL}{j}}_{j \in \ic{1:d}}$
  be the corresponding sequences of unit balls for these norms.
  The \Capra-subdifferential of the function $\lzero$ is the closed convex set given by
  \begin{subequations}\label{eq:capra_subdifferential_all_R}
    \begin{description}
    \item $\bullet$ if
      $\primal = 0$, 
      \begin{equation}
        \partial_{\CouplingCapra} \lzero(0) 
        = 
        \bigcap_{j \in \ic{1,d}} j \BALL_{(j),\star}^{\mathrm{\FlatRR}} \eqfinv
        \label{eq:capra_subdifferential_at_0}%
      \end{equation}
    \item $\bullet$ 
      if $\primal \neq 0$ and $\lzero(\primal) = \lzerovalue$,
      \begin{equation}
        \partial_{\CouplingCapra} \lzero(\primal) 
        =
        \NormalCone_{\BALL_{(l)}^{\mathrm{\FlatRR}}}
        \np{\frac{\primal}{\TripleNorm{\primal}_{(l)}^{\mathrm{\FlatRR}}}}
        \cap
        \Dual_{\lzerovalue}
        \eqfinp
        \label{eq:capra_subdifferential}%
      \end{equation}
    \end{description}
  \end{subequations}
\end{proposition}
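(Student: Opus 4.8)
The plan is to compute the \Capra-subdifferential directly from its defining characterization~\eqref{eq:Capra_subdifferential}. By definition, $\dual \in \partial_{\CouplingCapra}\lzero(\primal)$ holds if and only if $\SFM{\lzero}{\CouplingCapra}(\dual) = \CouplingCapra(\primal,\dual) - \lzero(\primal)$, which, combined with the supremum definition~\eqref{eq:Fenchel-Moreau_conjugate} of the conjugate, is equivalent to requiring that the pair $\np{\primal,\dual}$ \emph{attains} the supremum defining $\SFM{\lzero}{\CouplingCapra}(\dual)$. Thus the first step is to obtain a workable expression for $\SFM{\lzero}{\CouplingCapra}(\dual)$. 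I would exploit the fact that the \Capra\ coupling $\CouplingCapra(\primal,\dual) = \proscal{\primal}{\dual}/\TripleNorm{\primal}$ is constant along primal rays and that $\lzero$ is itself ray-constant away from the origin; this lets me reduce the supremum over $\primal \in \RR^d$ to a supremum over the unit sphere $\TripleNormSphere$, stratified according to the value $\lzero(\primal) = k$ of the support cardinality. On each stratum one maximizes $\proscal{\primal}{\dual}$ over unit-norm vectors with at most $k$ nonzero coordinates, and this is exactly what the dual coordinate-$k$ norm $\CoordinateNormDual{\TripleNorm{\dual}}{k}$ computes (by its variational definition). This yields the clean formula $\SFM{\lzero}{\CouplingCapra}(\dual) = \max_{j\in\ic{0,d}}\bp{\CoordinateNormDual{\TripleNorm{\dual}}{j} - j}$, with the convention that the $j=0$ term (the origin) contributes $0$.

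With this conjugate formula in hand, the second step is to translate the subdifferential equality into two coupled conditions. Writing $\lzero(\primal) = \lzerovalue$, the equality $\SFM{\lzero}{\CouplingCapra}(\dual) = \CouplingCapra(\primal,\dual) - \lzerovalue$ demands, first, that the index $\lzerovalue$ achieve the outer maximum over $j$, i.e.\ $\lzerovalue \in \argmax_{j}\bp{\CoordinateNormDual{\TripleNorm{\dual}}{j}-j}$ — this is precisely membership in the set $\Dual_{\lzerovalue}$ of~\eqref{eq:admissible_dual} — and, second, that $\primal$ achieve the inner maximum defining $\CoordinateNormDual{\TripleNorm{\dual}}{\lzerovalue}$, namely $\proscal{\primal}{\dual}/\TripleNorm{\primal}_{(\lzerovalue)}^{\mathrm{\FlatRR}} = \CoordinateNormDual{\TripleNorm{\dual}}{\lzerovalue}$. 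The standard duality relation between a norm and its dual norm converts this attainment condition into the statement that $\dual$ lies in the normal cone to the coordinate-$\lzerovalue$ unit ball $\BALL_{(\lzerovalue)}^{\mathrm{\FlatRR}}$ at the normalized point $\primal/\TripleNorm{\primal}_{(\lzerovalue)}^{\mathrm{\FlatRR}}$. Intersecting the two conditions gives~\eqref{eq:capra_subdifferential}.

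For the case $\primal = 0$, the coupling vanishes and $\lzero(0) = 0$, so the defining equality becomes $\SFM{\lzero}{\CouplingCapra}(\dual) = 0$, i.e.\ $\max_{j}\bp{\CoordinateNormDual{\TripleNorm{\dual}}{j}-j} = 0$. Since the $j=0$ term already forces this maximum to be at least $0$, the condition is equivalent to $\CoordinateNormDual{\TripleNorm{\dual}}{j} \leq j$ for every $j \in \ic{1,d}$. Rewriting each inequality as a membership $\dual \in j\,\BALL_{(j),\star}^{\mathrm{\FlatRR}}$ in the scaled dual coordinate-$j$ ball and intersecting over $j$ yields~\eqref{eq:capra_subdifferential_at_0}. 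Finally, the assertion that the subdifferential is closed and convex is inherited directly from the cited \cite[Proposition~1]{Chancelier-DeLara:2021_SVVA}, which guarantees this for any \Capra-subdifferential.

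I expect the main obstacle to lie in the first step: rigorously justifying the reduction of the unrestricted supremum over $\RR^d$ to the stratified sphere computation, and in particular verifying that the inner maximum over sparse unit vectors coincides with the dual coordinate-$k$ norm as given by its variational expression in Definition~\ref{de:coordinate_norm}. The homogeneity bookkeeping — keeping track of the ray-constancy of both $\CouplingCapra$ and $\lzero$, and correctly incorporating the $j=0$ boundary term corresponding to the origin — is where the argument must be handled with care, since a sign or normalization slip there would corrupt both the $\argmax$ condition and the normal-cone characterization. The subsequent conversion of the attainment condition into a normal-cone membership, while technically the crux of the non-trivial geometry, is a fairly standard consequence of convex duality once the conjugate formula is established.
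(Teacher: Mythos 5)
Note first that the paper itself does not prove Proposition~\ref{pr:capra_subdifferential_any_norm}: it is recalled from \cite[Proposition~4.7]{Chancelier-DeLara:2022_CAPRA_OPTIMIZATION} and \cite[Proposition~1]{Chancelier-DeLara:2021_SVVA}, so your blind derivation can only be checked against the definitions and against the machinery the paper deploys later (notably Lemma~\ref{le:normal_cone}). Your overall strategy is the natural first-principles reconstruction and is structurally sound: characterizing $\partial_{\CouplingCapra}\lzero(\primal)$ as attainment of the supremum defining $\SFM{\lzero}{\CouplingCapra}(\dual)$, computing that conjugate by ray-constancy and stratification by sparsity (which indeed yields $\SFM{\lzero}{\CouplingCapra}(\dual)=\max_{j\in\ic{0,d}}\bp{\CoordinateNormDual{\TripleNorm{\dual}}{j}-j}$, consistent with~\eqref{eq:lzero_conjugate}), and then splitting the equality into the outer $\argmax$ condition (membership in $\Dual_{\lzerovalue}$) and an inner attainment condition. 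The case $\primal=0$ is handled completely and correctly.

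The genuine gap is in the inner attainment condition at $\primal\neq 0$. What the conjugacy actually forces is normalized by the \emph{source} norm: equality throughout the chain
\begin{equation*}
  \frac{\proscal{\primal}{\dual}}{\TripleNorm{\primal}} - \lzerovalue
  \;\leq\;
  \CoordinateNormDual{\TripleNorm{\dual}}{\lzerovalue} - \lzerovalue
  \;\leq\;
  \max_{j\in\ic{0,d}}\bp{\CoordinateNormDual{\TripleNorm{\dual}}{j} - j}
\end{equation*}
gives $\proscal{\primal}{\dual} = \TripleNorm{\primal}\,\CoordinateNormDual{\TripleNorm{\dual}}{\lzerovalue}$, with $\TripleNorm{\primal}$ in front. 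You instead wrote the attainment as $\proscal{\primal}{\dual}/\CoordinateNorm{\TripleNorm{\primal}}{\lzerovalue} = \CoordinateNormDual{\TripleNorm{\dual}}{\lzerovalue}$, which is the form needed to invoke the duality equivalence~\eqref{eq:couple_TripleNorm-dual_and_normal_cone} for the coordinate-norm pair and land in $\NormalCone_{\BALL_{(\lzerovalue)}^{\mathrm{\FlatRR}}}\bp{\primal/\CoordinateNorm{\TripleNorm{\primal}}{\lzerovalue}}$, as in~\eqref{eq:capra_subdifferential}. Passing from one to the other requires the identity $\TripleNorm{\primal} = \CoordinateNorm{\TripleNorm{\primal}}{\lzerovalue}$ whenever $\lzero(\primal)=\lzerovalue\geq 1$, i.e.\ that the coordinate-$\lzerovalue$ norm coincides with the source norm on $\lzerovalue$-sparse vectors. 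This is true but is not a definition-chase (the coordinate-$k$ norm is defined only through a double dualization); it is precisely \cite[Proposition~3.5]{Chancelier-DeLara:2022_CAPRA_OPTIMIZATION}, which this paper itself must invoke to establish~\eqref{eq:F2} in Lemma~\ref{le:normal_cone}. Without stating or proving that fact, your normal cone sits at an unjustified base point and~\eqref{eq:capra_subdifferential} does not follow; with it, your argument closes. Relatedly, you flagged the wrong step as the main obstacle: the reduction of the supremum to sparse unit vectors is a routine closure/homogeneity argument, whereas the silent substitution of $\CoordinateNorm{\TripleNorm{\primal}}{\lzerovalue}$ for $\TripleNorm{\primal}$ is where the real work hides.
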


\section{\Capra-subdifferential of $\lzero$ for the $\ell_p$ source norms}
\label{sec:capra_subdifferential_l0}

The main contribution of this article is the following
Theorem~\ref{th:capra_convexity_and_subdifferentiability}.
It provides
explicit formulas for the \Capra-subdifferential of the $\lzero$ pseudonorm,
as introduced in~\eqref{eq:Capra_subdifferential}
and as characterized in Proposition~\ref{pr:capra_subdifferential_any_norm}
for the $\ell_p$ source norms $\TripleNorm{\cdot} = \Norm{\cdot}_p$
when $p \in \nc{1, \infty}$.

We need to introduce the following norms and notations. 
For $\dual \in \RR^d$,
if $\nu$ is a permutation of $\ic{1, d}$ such that $\abs{\dual_{\nu(1)}} \geq \ldots \geq \abs{\dual_{\nu(d)}}$,
the top $\np{k,q}$-norm $\TopNorm{\Norm{\cdot}}{k, q}$, for $k\in \ic{1,d}$,
is given by 
\begin{equation}
  \TopNorm{\Norm{\dual}}{k, q} 
  =
  \Bp{\sum_{i=1}^k \abs{\dual_{\nu(i)}}^q}^\frac{1}{q}
  \eqsepv
  \text{if} \quad q \in \rightopen{1, \infty}
  \eqsepv
  \text{and} \quad
  \TopNorm{\Norm{\dual}}{k, \infty}
  = \Norm{\dual}_\infty
  \eqfinv
  \label{eq:top_norm_explicit}%
\end{equation}
and the $\np{p,k}$-support norm $\OriginalSupportNorm{\Norm{\cdot}}{p, k}$
is the dual norm of the top $\np{k,q}$-norm $\TopNorm{\Norm{\cdot}}{k, q}$,
as defined in~\cite[\S8.1]{McDonald-Pontil-Stamos:2016}.
Besides,
for any \( \primal \in \RR^d \) 
and subset \( K \subseteq \na{1,\ldots,d} \), 
we denote by
\( \primal_K \in \RR^d \) the vector which coincides with~\( \primal \),
except for the components outside of~$K$ that vanish:
\( \primal_K \) is the orthogonal projection of~\( \primal \) onto
the subspace\footnote{%
  Here, following notation from Game Theory, 
  we have denoted by $-K$ the complementary subset 
  of~$K$ in \( \na{1,\ldots,d} \): \( K \cup (-K) = \na{1,\ldots,d} \)
  and \( K \cap (-K) = \emptyset \).}
\begin{equation}
  \FlatRR_{K} = \RR^K \times \{0\}^{-K} =
  \bset{ \primal \in \RR^d }{ \primal_j=0 \eqsepv \forall j \not\in K } 
  \subseteq \RR^d 
  \eqfinv
  \label{eq:FlatRR}
\end{equation}
where \( \FlatRR_{\emptyset}=\{0\} \).


\begin{theorem}
  \label{th:capra_convexity_and_subdifferentiability}
  Let the source norm $\TripleNorm{\cdot} = \Norm{\cdot}_p$, 
  where $p \in \nc{1, \infty}$.  
  
\item $\bullet$
  If $p = 1$,
  the $\lzero$ pseudonorm is not
  \Capra-convex, as
  its \Capra-biconjugate is
  \begin{equation}
    \SFMbi{ \lzero }{\CouplingCapra} : \ \primal \mapsto
    \begin{cases}
      0 \eqsepv &\text{ if } \primal = 0 \eqfinv \\
      1 \eqsepv &\text{ if } \primal \neq 0 \eqfinp
    \end{cases}
    \label{eq:biconjugate_1}%
  \end{equation}
  Moreover, $\lzero$ is
  only \Capra-subdifferentiable over 
  $\dom\bp{\partial_{\CouplingCapra} \lzero} 
  = \defset{\primal \in \PRIMAL}{\lzero(\primal) \leq 1}$.
  Over this domain, the \Capra-subdifferential of~$\lzero$ 
  is given by
  \begin{equation}
    \partial_{\CouplingCapra} \lzero(0) 
    = 
    \BALL_{\Norm{\cdot}_\infty}
    \
    \text{ and }
    \ 
    \partial_{\CouplingCapra} \lzero(\primal) 
    = 
    \NormalCone_{\BALL_{\Norm{\cdot}_1}}
    \np{\frac{\primal}{\Norm{\primal}_1}}
    \cap
    \defset{\dual \in \DUAL}{\Norm{\dual}_\infty \geq 1}
    \eqsepv
    \text{ if } \ \lzero(\primal) = 1 
    \eqfinp
    \label{eq:explicit_capra_subdifferential_l1}%
  \end{equation}
  
\item $\bullet$
  If $p \in \open{1, \infty}$,
  the $\lzero$ pseudonorm is 
  \Capra-convex and \Capra-subdifferentiable everywhere,
  meaning that $\dom\bp{\partial_{\CouplingCapra} \lzero} = \RR^d$.
  Its \Capra-subdifferential is given 
  by
  \begin{subequations}
    \begin{equation}
      \partial_{\CouplingCapra} \lzero(0) 
      = 
      \BALL_{\Norm{\cdot}_\infty} \eqfinv
      \label{eq:explicit_capra_subdifferential_at_0_lp}
    \end{equation}
    and at $\primal \neq 0$, denoting $\lzerovalue = \lzero(\primal)$,
    $\support = \SupportMapping(\primal)$, and $q \in \nc{1,\infty}$
    such that $\frac{1}{p} + \frac{1}{q} = 1$,
    by
    \begin{equation}
      \dual \in
      \partial_{\CouplingCapra} \lzero(\primal) 
      \iff
      \begin{cases}
        \dual_\support \in
        \NormalCone_{\BALL_{\Norm{\cdot}_p}}
        \np{\frac{\primal}{\Norm{\primal}_p}} 
        \eqfinv \\
        \abs{\dual_j} \leq \min_{i \in \support} \abs{\dual_i} \eqsepv 
        \forall j \notin \support \eqfinv \\
        \abs{\dual_{\nu(k+1)}}^q \geq \bp{\TopNorm{\Norm{\dual}}{k, q} + 1}^q 
        - \bp{\TopNorm{\Norm{\dual}}{k, q}}^q \eqsepv 
        \forall k \in \ic{0, \lzerovalue-1} 
        \eqfinv
        \\
        \abs{\dual_{\nu(\lzerovalue+1)}}^q
        \leq 
        \bp{\TopNorm{\Norm{\dual}}{\lzerovalue, q} + 1}^q 
        - \bp{\TopNorm{\Norm{\dual}}{\lzerovalue, q}}^q
        \;(\text{when}\;  \lzerovalue\not= d) \eqfinv
      \end{cases} \label{eq:explicit_capra_subdifferential_lp}%
    \end{equation}
    where, for any $\dual \in \RR^d$,
    $\nu$ denotes a permutation of $\ic{1, d}$ such that 
    $\abs{\dual_{\nu(1)}} \geq \ldots \geq \abs{\dual_{\nu(d)}}$.
  \end{subequations}
  
\item $\bullet$
  If $p = \infty$,
  the $\lzero$ pseudonorm is not
  \Capra-convex, 
  as its \Capra-biconjugate is
  \begin{equation}
    \SFMbi{ \lzero }{\CouplingCapra} : \ \primal \mapsto
    \begin{cases}
      0 \eqsepv &\text{ if } \primal = 0 \eqfinv \\
      \frac{\Norm{\primal}_1}{\Norm{\primal}_\infty} \eqsepv &\text{ if } \primal \neq 0 \eqfinp
    \end{cases}
    \label{eq:biconjugate_infty}%
  \end{equation}
  Moreover, the function~$\lzero$ is
  only \Capra-subdifferentiable 
  over the domain
    \begin{equation}
    \dom\bp{\partial_{\CouplingCapra} \lzero} 
    = \bset{\primal\in\RR^d}%
    {\exists \lambda > 0 \eqsepv \primal_k \in \na{-\lambda, 0, \lambda } \eqsepv \forall k \in \ic{1, d} } 
    = 
  \bigcup_{\lambda > 0} \na{-\lambda, 0, \lambda}^d
  \eqfinp
    \end{equation}
  Over this domain,
  the \Capra-subdifferential of~$\lzero$ is given   by
  \begin{subequations}
    \begin{equation}
      \partial_{\CouplingCapra} \lzero(0) 
      = 
      \BALL_{\Norm{\cdot}_\infty} \eqfinv
      \label{eq:explicit_capra_subdifferential_at_0_l_infty}
    \end{equation}
    and, at $\primal \in \cup_{\lambda > 0} \na{-\lambda, 0, \lambda}^d \setminus \na{0}$,
    denoting $\lzerovalue = \lzero(\primal)$,
    $\support = \SupportMapping(\primal)$,
    by
    \begin{equation}
      \dual \in
      \partial_{\CouplingCapra} \lzero(\primal) 
      \iff
      \begin{cases}
        \dual_\support \in
        \NormalCone_{\BALL_{\Norm{\cdot}_\infty}}
        \np{\frac{\primal}{\Norm{\primal}_\infty}} 
        \eqfinv \\
        \abs{\dual_j} \leq \min_{i \in \support} \abs{\dual_i} \eqsepv 
        \forall j \notin \support \eqfinv \\
        \abs{\dual_{\nu(k+1)}} \geq 1 \eqsepv 
        \forall k \in \ic{0, \lzerovalue-1} 
        \eqfinv
        \\
        \abs{\dual_{\nu(\lzerovalue+1)}}
        \leq 1 \;(\text{when}\;  \lzerovalue\not= d) \eqfinv
      \end{cases} \label{eq:explicit_capra_subdifferential_l_infty}%
    \end{equation}
    where, for any $\dual \in \RR^d$,
    $\nu$ denotes a permutation of $\ic{1, d}$ such that 
    $\abs{\dual_{\nu(1)}} \geq \ldots \geq \abs{\dual_{\nu(d)}}$.
  \end{subequations}
\end{theorem}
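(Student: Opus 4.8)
The plan is to handle the interior range $p \in \open{1, \infty}$ by specializing the generic formula of Proposition~\ref{pr:capra_subdifferential_any_norm} to the $\ell_p$ source norm, and to handle the extreme cases $p \in \na{1, \infty}$ by a direct computation of the \Capra-conjugate and biconjugate of $\lzero$. The first preparatory step, for $p \in \open{1, \infty}$, is to make the coordinate norms from Definition~\ref{de:coordinate_norm} explicit. Using the support/top duality recalled after~\eqref{eq:top_norm_explicit}, I would show that the dual coordinate-$k$ norm is the top $\np{k,q}$-norm, $\CoordinateNormDual{\Norm{\cdot}_p}{k} = \TopNorm{\Norm{\cdot}}{k, q}$, that the coordinate-$k$ norm is the $\np{p,k}$-support norm, $\CoordinateNorm{\Norm{\cdot}_p}{k} = \OriginalSupportNorm{\Norm{\cdot}}{p, k}$, and that $\TripleNorm{\primal}_{(l)}^{\mathrm{\FlatRR}} = \Norm{\primal}_p$ whenever $\lzero(\primal) \leq l$, since the support norm restricts to the $\ell_p$ norm on $l$-sparse vectors. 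With these identifications, the two ingredients of~\eqref{eq:capra_subdifferential} become computable.

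The geometric core is to characterize the normal cone $\NormalCone_{\BALL_{(l)}^{\mathrm{\FlatRR}}}\bp{\primal/\TripleNorm{\primal}_{(l)}^{\mathrm{\FlatRR}}}$ at a point $\primal$ with $\lzero(\primal) = l$ and support $\support$. Since the support function of $\BALL_{(l)}^{\mathrm{\FlatRR}}$ is the dual norm $\TopNorm{\Norm{\cdot}}{l, q}$, a dual vector $\dual$ lies in this cone \IFF $\proscal{\dual}{\primal/\Norm{\primal}_p} = \TopNorm{\Norm{\dual}}{l, q}$. I would then sandwich $\proscal{\dual}{\primal/\Norm{\primal}_p} \leq \Norm{\dual_\support}_q \leq \TopNorm{\Norm{\dual}}{l, q}$, the first inequality by H\"older on $\support$ and the second because the top $\np{l,q}$-norm selects the $l$ largest magnitudes. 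Equality throughout forces both inequalities to be tight: tightness in the first is exactly the first line of~\eqref{eq:explicit_capra_subdifferential_lp}, namely $\dual_\support \in \NormalCone_{\BALL_{\Norm{\cdot}_p}}\bp{\primal/\Norm{\primal}_p}$; tightness in the second means $\support$ indexes the $l$ largest magnitudes of $\dual$, which is the second line $\abs{\dual_j} \leq \min_{i \in \support}\abs{\dual_i}$ for $j \notin \support$.

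The combinatorial core is to unfold the set $\Dual_\lzerovalue$ of~\eqref{eq:admissible_dual}, i.e.\ the condition $\lzerovalue \in \argmax_{j \in \ic{0,d}} \bp{\TopNorm{\Norm{\dual}}{j, q} - j}$. From $\TopNorm{\Norm{\dual}}{j, q}^q - \TopNorm{\Norm{\dual}}{j-1, q}^q = \abs{\dual_{\nu(j)}}^q$ with nonincreasing magnitudes $\abs{\dual_{\nu(j)}}$, concavity of $t \mapsto t^{1/q}$ makes the increments of $j \mapsto \TopNorm{\Norm{\dual}}{j, q}$ nonincreasing, hence $j \mapsto \TopNorm{\Norm{\dual}}{j, q} - j$ is concave on $\ic{0,d}$; membership in the $\argmax$ then reduces to the two neighbor comparisons $\TopNorm{\Norm{\dual}}{k+1, q} \geq \TopNorm{\Norm{\dual}}{k, q} + 1$ for $k < \lzerovalue$ and $\TopNorm{\Norm{\dual}}{\lzerovalue+1, q} \leq \TopNorm{\Norm{\dual}}{\lzerovalue, q} + 1$. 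Raising to the power $q$ and substituting $\TopNorm{\Norm{\dual}}{k+1, q}^q = \TopNorm{\Norm{\dual}}{k, q}^q + \abs{\dual_{\nu(k+1)}}^q$ yields the third and fourth lines of~\eqref{eq:explicit_capra_subdifferential_lp}. For $\primal = 0$, I would evaluate $\bigcap_{j \in \ic{1,d}} j \BALL_{(j),\star}^{\mathrm{\FlatRR}} = \bigcap_{j \in \ic{1,d}} \defset{\dual}{\TopNorm{\Norm{\dual}}{j, q} \leq j}$ and note that $\Norm{\dual}_\infty \leq 1$ implies $\TopNorm{\Norm{\dual}}{j, q} \leq j^{1/q} \leq j$, so the binding constraint is $j = 1$, giving $\BALL_{\Norm{\cdot}_\infty}$ as in~\eqref{eq:explicit_capra_subdifferential_at_0_lp}. \Capra-convexity on all of $\RR^d$ then follows from Theorem~\ref{th:capra_convexity}, both $\Norm{\cdot}_p$ and $\Norm{\cdot}_q$ being orthant-strictly monotonic for $p \in \open{1, \infty}$.

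For the extreme cases, I would compute the conjugate directly, restricting the supremum to the source-norm unit sphere, which is legitimate because $\CouplingCapra$ is constant along primal rays. This gives $\SFM{\lzero}{\CouplingCapra}(\dual) = \max\bp{0, \Norm{\dual}_\infty - 1}$ for $p = 1$ and $\SFM{\lzero}{\CouplingCapra}(\dual) = \sum_{i=1}^d \bp{\abs{\dual_i} - 1}_+$ for $p = \infty$; conjugating once more produces the biconjugates~\eqref{eq:biconjugate_1} and~\eqref{eq:biconjugate_infty}. In particular $\SFMbi{\lzero}{\CouplingCapra}(\primal) = \Norm{\primal}_1/\Norm{\primal}_\infty$ for $p = \infty$, which is strictly below $\lzero(\primal)$ unless the nonzero coordinates of $\primal$ all share one magnitude, so $\lzero$ is not \Capra-convex in either extreme. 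The subdifferentials and their domains then follow from Proposition~\ref{pr:capra_subdifferential_any_norm} read in the limits: for $q = \infty$ the top norm collapses to $\Norm{\cdot}_\infty$, so $\Dual_\lzerovalue = \emptyset$ once $\lzerovalue \geq 2$ and the domain shrinks to $\defset{\primal}{\lzero(\primal) \leq 1}$, yielding~\eqref{eq:explicit_capra_subdifferential_l1}; for $q = 1$ the conditions become $\abs{\dual_{\nu(k+1)}} \geq 1$ and $\abs{\dual_{\nu(\lzerovalue+1)}} \leq 1$, while the normal-cone step forces equal-magnitude nonzero coordinates, giving the domain $\bigcup_{\lambda > 0} \na{-\lambda, 0, \lambda}^d$ and~\eqref{eq:explicit_capra_subdifferential_l_infty}. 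I expect the main obstacle to be the normal-cone characterization together with the identification $\TripleNorm{\cdot}_{(l)}^{\mathrm{\FlatRR}} = \Norm{\cdot}_p$ on $l$-sparse vectors: the clean split into on-support and off-support conditions rests on both sides of the sandwich being tight simultaneously, and on carefully unpacking the $\FlatRR$-restricted coordinate norms of Definition~\ref{de:coordinate_norm}; the $\argmax$ unfolding, by contrast, is routine once the concavity is in place.
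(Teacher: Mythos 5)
Your proposal is correct and follows essentially the same route as the paper: it specializes the generic formula of Proposition~\ref{pr:capra_subdifferential_any_norm} through the coordinate-norm identifications of Proposition~\ref{pr:table_norms}, characterizes the normal cone by the same H\"older/top-norm tightness argument (the paper's Lemma~\ref{le:normal_cone} and Proposition~\ref{pr:normal_cone}), unfolds the $\argmax$ condition defining $\Dual_\lzerovalue$ via concavity of $t \mapsto t^{1/q}$ (the paper's Lemmas~\ref{le:admissible_dual_inequality} and~\ref{le:top_norm_inequality}), and computes the conjugate and biconjugate directly in the extreme cases. Two variations are worth noting. First, you package the $\argmax$ reduction as discrete concavity of the sequence $j \mapsto \TopNorm{\Norm{\dual}}{j, q} - j$, so that global maximality reduces to the two neighbor comparisons; this is a cleaner, equivalent formulation of the paper's one-step-implies-all-steps lemma, and your sandwich $\proscal{\dual}{\primal/\Norm{\primal}_p} \leq \Norm{\dual_\support}_q \leq \TopNorm{\Norm{\dual}}{\lzerovalue, q}$ likewise streamlines the paper's two-case analysis on $\cardinal{\SupportMapping(\dual)}$ versus $\lzerovalue$. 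Second, for the domains in the extreme cases $p \in \na{1,\infty}$ you argue emptiness of the subdifferential directly from incompatibility of the conditions in the explicit formulas (for $p=\infty$, the constraint $\abs{\dual_{\nu(k+1)}} \geq 1$ forces all on-support entries of $\dual$ to be nonzero, which the normal cone to $\BALL_{\Norm{\cdot}_\infty}$ tolerates only when all nonzero entries of $\primal$ share the same magnitude), whereas the paper deduces the upper bound on $\dom\bp{\partial_{\CouplingCapra} \lzero}$ from the biconjugate mismatch via Fact~\ref{fact:domain}; both mechanisms are sound. The one loose end in your write-up is the reverse inclusion for the domain equalities: you must also exhibit a subgradient at every point of the claimed domains (the paper constructs one explicitly, essentially $\dual$ equal to the signs of $\primal$ on its support and zero elsewhere); this is immediate from your formulas but should be stated rather than left implicit.
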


We proceed in three steps to prove Theorem~\ref{th:capra_convexity_and_subdifferentiability}. 
First, in~\S\ref{sec:explicit_admissible_dual},
we provide an explicit description of the set~$\Dual_{\lzerovalue}$
in~\eqref{eq:admissible_dual} (that appears in~\eqref{eq:capra_subdifferential}).
Second, in~\S\ref{sec:explicit_normal_cone}, 
we provide an explicit expression 
for the normal cone~$\NormalCone_{\BALL_{(l)}^{\mathrm{\FlatRR}}}$
in~\eqref{eq:capra_subdifferential}.
Third, in~\S\ref{subsec:Proof_of_Theorem}, 
we apply both results to the
generic formulation
of the \Capra-subdifferential of $\lzero$ given in~\eqref{eq:capra_subdifferential_all_R},
and wrap up the proof
of Theorem~\ref{th:capra_convexity_and_subdifferentiability}.


We will need the following 
properties
of the coordinate-$k$ and dual coordinate-$k$ norms
of Definition~\ref{de:coordinate_norm}.

\begin{proposition}[from \cite{Chancelier-DeLara:2022_CAPRA_OPTIMIZATION}, Table~1]
  \label{pr:table_norms}
  Let the source norm $\TripleNorm{\cdot}$ be a $\ell_p$ norm
  with $p \in \nc{1, \infty}$,
  and let $q \in \nc{1, \infty}$ 
  such that $\frac{1}{p} + \frac{1}{q} = 1$.
  The coordinate-$k$ and dual coordinate-$k$ norms
  in Definition~\ref{de:coordinate_norm}
  are given, for $k \in \ic{1, d}$, by
  \begin{equation}
    \CoordinateNormDual{\TripleNorm{\cdot}}{k} 
    = \TopNorm{\Norm{\cdot}}{k, q}
    \quad \text{and} \quad
    \CoordinateNorm{\TripleNorm{\cdot}}{k} 
    = \OriginalSupportNorm{\Norm{\cdot}}{p, k}
    \eqfinp
    \label{eq:table_norms}
  \end{equation}
\end{proposition}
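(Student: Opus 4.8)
The plan is to exploit the fact that the two identities in~\eqref{eq:table_norms} are dual to one another, so that a single computation suffices. Indeed, by Definition~\ref{de:coordinate_norm} the coordinate-$k$ norm $\CoordinateNorm{\TripleNorm{\cdot}}{k}$ and the dual coordinate-$k$ norm $\CoordinateNormDual{\TripleNorm{\cdot}}{k}$ form a dual pair of norms, while by construction (see \cite[\S8.1]{McDonald-Pontil-Stamos:2016}) the $\np{p,k}$-support norm $\OriginalSupportNorm{\Norm{\cdot}}{p, k}$ is precisely the dual norm of the top $\np{k,q}$-norm $\TopNorm{\Norm{\cdot}}{k, q}$. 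Hence, once I establish the first identity $\CoordinateNormDual{\Norm{\cdot}_p}{k} = \TopNorm{\Norm{\cdot}}{k, q}$, taking dual norms on both sides immediately yields the second identity $\CoordinateNorm{\Norm{\cdot}_p}{k} = \OriginalSupportNorm{\Norm{\cdot}}{p, k}$. I therefore concentrate all the work on the dual coordinate-$k$ norm.

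For the first identity, I would start from the variational expression recalled in Definition~\ref{de:coordinate_norm}, namely that $\CoordinateNormDual{\TripleNorm{\dual}}{k}$ is the supremum of the bilinear pairing over the $k$-sparse unit ball of the source norm,
\[
  \CoordinateNormDual{\TripleNorm{\dual}}{k} = \sup\bset{\proscal{\primal}{\dual}}{\TripleNorm{\primal}\leq 1 \eqsepv \lzero(\primal)\leq k} \eqfinp
\]
Specializing to $\TripleNorm{\cdot}=\Norm{\cdot}_p$, I would split this supremum according to the support $K = \SupportMapping(\primal)$, subject to $\cardinal{K}\leq k$. For a fixed support $K$, maximizing the linear form $\proscal{\primal_K}{\dual_K}$ over $\Norm{\primal_K}_p\leq 1$ is exactly the definition of the dual norm, and since the norm dual to $\Norm{\cdot}_p$ on the coordinates of $K$ is $\Norm{\cdot}_q$, this inner supremum equals $\Norm{\dual_K}_q$ by the (tight) Hölder inequality. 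It then remains to optimize over supports, $\max_{\cardinal{K}\leq k}\Norm{\dual_K}_q$, and the maximum is plainly attained by choosing $K$ to gather the $k$ indices carrying the largest values $\abs{\dual_i}$, which produces $\bp{\sum_{i=1}^k \abs{\dual_{\nu(i)}}^q}^{1/q} = \TopNorm{\Norm{\dual}}{k, q}$, matching~\eqref{eq:top_norm_explicit}.

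Finally, I would verify the two boundary exponents. For $p=\infty$ (so $q=1$) the inner supremum is $\Norm{\dual_K}_1$ and the outer maximum is the sum of the $k$ largest magnitudes, that is $\TopNorm{\Norm{\dual}}{k, 1}$. For $p=1$ (so $q=\infty$) the inner supremum is $\Norm{\dual_K}_\infty$ and, for every $k\geq 1$, $\max_{\cardinal{K}\leq k}\Norm{\dual_K}_\infty = \Norm{\dual}_\infty$, which is exactly the convention $\TopNorm{\Norm{\dual}}{k, \infty}=\Norm{\dual}_\infty$ fixed in~\eqref{eq:top_norm_explicit}. With the first identity thus established in all cases, the second follows by the duality argument of the first paragraph, completing the proof. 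The only genuinely delicate point is the bookkeeping of conventions: one must use the correct duality direction --- it is the $k$-sparse unit ball of the \emph{source} norm, not of its dual, that enters $\CoordinateNormDual{\TripleNorm{\cdot}}{k}$ --- so as to land on the support norm rather than on the top norm with exponent $p$, and one must respect the $q=\infty$ convention for the top norm; the core estimate is otherwise a one-line Hölder-plus-rearrangement computation.
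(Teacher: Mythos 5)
Your proposal is correct, but it is worth pointing out that the paper does not actually prove Proposition~\ref{pr:table_norms}: the statement is imported as-is from \cite[Table~1]{Chancelier-DeLara:2022_CAPRA_OPTIMIZATION}, so your argument supplies a self-contained derivation that the paper delegates to the literature. Your route is sound. The one step you present as ``recalled in Definition~\ref{de:coordinate_norm}'' is in fact a (short) deduction you should make explicit: the definition gives $\CoordinateNormDual{\TripleNorm{\dual}}{k}=\sup_{\cardinal{K}\leq k}\TripleNorm{\dual_K}_{K,\star}$ via restriction-then-dual norms, and one obtains your $k$-sparse-ball formula by unfolding it --- for $\primal\in\FlatRR_{K}$ one has $\proscal{\primal}{\dual_K}=\proscal{\primal}{\dual}$ and $\TripleNorm{\primal}_{K}=\TripleNorm{\primal}$, so the union over $\cardinal{K}\leq k$ of the restricted unit balls is exactly $\defset{\primal\in\RR^d}{\TripleNorm{\primal}\leq 1 \eqsepv \lzero(\primal)\leq k}$. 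The rest is as you say: for fixed $K$ the inner supremum equals $\Norm{\dual_K}_q$ by tight H{\"o}lder duality on $\FlatRR_{K}$, the outer maximum is attained by collecting the $k$ largest magnitudes, and the boundary cases $q=1$ and $q=\infty$ match the conventions in~\eqref{eq:top_norm_explicit}. Your passage to the second identity implicitly uses biduality of norms on $\RR^d$ (Definition~\ref{de:coordinate_norm} makes $\CoordinateNorm{\TripleNorm{\cdot}}{k}$ the predual of $\CoordinateNormDual{\TripleNorm{\cdot}}{k}$, so equality of the duals gives equality of the norms only after applying $\TripleNorm{\cdot}_{\star\star}=\TripleNorm{\cdot}$); in finite dimension this is standard, and your ``dual pair'' phrasing covers it. Compared with the paper's bare citation, your computation buys self-containedness and an explicit check of the conventions, at the cost of about half a page.
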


\subsection{Description of the sets~$\Dual_\lzerovalue$}
\label{sec:explicit_admissible_dual}%

We derive explicit descriptions of the sets $\Dual_{\lzerovalue}$ in~\eqref{eq:admissible_dual} 
for the $\ell_p$ source norms 
$\TripleNorm{\cdot} = \Norm{\cdot}_p$, when $p \in \nc{1, \infty}$.
We start with two preliminary results on the top $\np{k,q}$-norm $\TopNorm{\Norm{\cdot}}{k, q}$,
whose expression is given in~\eqref{eq:top_norm_explicit}.
We state our first preliminary result in
Lemma~\ref{le:admissible_dual_inequality}.

\begin{lemma}
  \label{le:admissible_dual_inequality}%
  Let $\dual \in \DUAL$, $q \in \rightopen{1, \infty}$ and $k \in \ic{0, d-1}$.
  We have that
  \begin{equation}
    \TopNorm{\Norm{\dual}}{k+1, q} - 1 \leq \TopNorm{\Norm{\dual}}{k, q}  
    \implies \TopNorm{\Norm{\dual}}{k+j, q} - j \leq  \TopNorm{\Norm{\dual}}{k, q} \eqsepv  
    \forall j \in \ic{1, d-k} \eqfinp
    \label{eq:admissible_dual_inequality}
  \end{equation}
  Moreover, the same result holds if inequalities are strict in~\eqref{eq:admissible_dual_inequality}.
\end{lemma}

\begin{proof}
  Let $\dual \in \DUAL$ and $\nu$ a permutation of $\ic{1, d}$
  such that $\abs{\dual_{\nu(1)}} \geq \ldots \geq \abs{\dual_{\nu(d)}}$.
  Let also $q \in \rightopen{1, \infty}$, $k \in \ic{0, d-1}$ and $j \in \ic{1, d-k}$. 
  We use the shorthand notation 
  \begin{equation}
    \dual_{k,q}^\Sigma = \sum_{i=1}^k \abs{\dual_{\nu(i)}}^q \eqfinv
    \label{eq:Sigma}%
  \end{equation}
  so that, from Proposition~\ref{pr:table_norms}, we have that
  $\TopNorm{\Norm{\dual}}{k, q} = \bp{\dual_{k,q}^\Sigma}^{\frac{1}{q}}$.
  
  First,
  we prove the inequality
  \begin{equation}
    \bp{\dual_{k,q}^\Sigma + j \abs{\dual_{\nu(k+1)}}^q}^{\frac{1}{q}}
    - \bp{\dual_{k,q}^\Sigma}^{\frac{1}{q}}
    \leq
    j \Bc{\bp{\dual_{k,q}^\Sigma + \abs{\dual_{\nu(k+1)}}^q}^{\frac{1}{q}}
      - \bp{\dual_{k,q}^\Sigma}^{\frac{1}{q}}} \eqfinp
    \label{eq:inequality}%
  \end{equation}
  Indeed, we have that
  \begin{align*}
    \frac{1}{j}\bp{\dual_{k,q}^\Sigma + j \abs{\dual_{\nu(k+1)}}^q}^{\frac{1}{q}}
    + \bp{1 - \frac{1}{j}} \bp{\dual_{k,q}^\Sigma}^{\frac{1}{q}}
    &\leq
      \Bp{\frac{1}{j}\bp{\dual_{k,q}^\Sigma + j \abs{\dual_{\nu(k+1)}}^q}
      + \bp{1 - \frac{1}{j}} \dual_{k,q}^\Sigma}^{\frac{1}{q}} \eqfinv
      \intertext{by concavity of the 
      function~$x \mapsto x^{\frac{1}{q}}$ on $\RR_+$ for $q \geq 1$, }
      \implies 
      \bp{\dual_{k,q}^\Sigma + j \abs{\dual_{\nu(k+1)}}^q}^{\frac{1}{q}}
      + \bp{j - 1} \bp{\dual_{k,q}^\Sigma}^{\frac{1}{q}}
    &\leq
      j \Bp{\dual_{k,q}^\Sigma + \abs{\dual_{\nu(k+1)}}^q}^{\frac{1}{q}} \eqfinv
    \\
    \implies 
    \bp{\dual_{k,q}^\Sigma + j \abs{\dual_{\nu(k+1)}}^q}^{\frac{1}{q}}
    - \bp{\dual_{k,q}^\Sigma}^{\frac{1}{q}}
    &\leq
      j \Bc{\bp{\dual_{k,q}^\Sigma + \abs{\dual_{\nu(k+1)}}^q}^{\frac{1}{q}}
      - \bp{\dual_{k,q}^\Sigma}^{\frac{1}{q}}} \eqfinp
  \end{align*}
  
  Second, we prove the implication in~\eqref{eq:admissible_dual_inequality}
  in its nonstrict inequality version. 
  Let us assume that
  $\TopNorm{\Norm{\dual}}{k+1, q} - 1 \leq \TopNorm{\Norm{\dual}}{k, q}$.
  By definition of $\dual_{k,q}^\Sigma$ in~\eqref{eq:Sigma} and
  since $\abs{\dual_{\nu(k+1)}} \geq \abs{\dual_{\nu(k+2)}}
  \geq \ldots \geq \abs{\dual_{\nu(k+j)}} $,
  we have that 
  \begin{align*}
    \TopNorm{\Norm{\dual}}{k+j, q}
     - \TopNorm{\Norm{\dual}}{k, q}
     & \leq
      \bp{\dual_{k,q}^\Sigma + j \abs{\dual_{\nu(k+1)}}^q}^{\frac{1}{q}}
      - \bp{\dual_{k,q}^\Sigma}^{\frac{1}{q}} \eqfinv
      \\
    &\leq 
      j \Bc{\bp{\dual_{k,q}^\Sigma + \abs{\dual_{\nu(k+1)}}^q}^{\frac{1}{q}}
      - \bp{\dual_{k,q}^\Sigma}^{\frac{1}{q}}} \eqfinv
      \tag{from \eqref{eq:inequality}}
\\ 
    &=
      j \Bc{\TopNorm{\Norm{\dual}}{k+1, q} - \TopNorm{\Norm{\dual}}{k, q}} \eqfinv
      \tag{from the expression of $\TopNorm{\Norm{\cdot}}{k, q}$
      in~\eqref{eq:top_norm_explicit} and by~\eqref{eq:Sigma} }
    \\
    &\leq j
      \tag{by the assumption that $\TopNorm{\Norm{\dual}}{k+1, q} - \TopNorm{\Norm{\dual}}{k, q} \leq 1$}
      \eqfinv
  \end{align*}
  which proves that $\TopNorm{\Norm{\dual}}{k+j, q} - j \leq  \TopNorm{\Norm{\dual}}{k, q}$.
  The proof of the strict inequality version of~\eqref{eq:admissible_dual_inequality}
  is analogous.
  \medskip
  
  This ends the proof.
\end{proof}

We state our second preliminary result in
Lemma~\ref{le:top_norm_inequality}.

\begin{lemma}
  \label{le:top_norm_inequality}%
  Let $\dual \in \DUAL$, $q \in \rightopen{1, \infty}$ and $k \in \ic{0, d-1}$.
  We have that
  \begin{equation}
    \TopNorm{\Norm{\dual}}{k+1, q} - 1 \leq \TopNorm{\Norm{\dual}}{k, q} 
    \iff \abs{\dual_{\nu(k+1)}}^q \leq 
    \bp{\TopNorm{\Norm{\dual}}{k, q} + 1}^q 
    - \bp{\TopNorm{\Norm{\dual}}{k, q}}^q 
    \eqfinp
    \label{eq:top_norm_inequality}%
  \end{equation}
  Moreover, the same result holds if inequalities are strict or replaced
  with equalities in~\eqref{eq:top_norm_inequality}.
\end{lemma}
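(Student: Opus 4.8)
The plan is to observe that~\eqref{eq:top_norm_inequality} is a purely algebraic equivalence obtained by raising both sides to the power~$q$. First I would rewrite the left-hand inequality $\TopNorm{\Norm{\dual}}{k+1, q} - 1 \leq \TopNorm{\Norm{\dual}}{k, q}$ equivalently as $\TopNorm{\Norm{\dual}}{k+1, q} \leq \TopNorm{\Norm{\dual}}{k, q} + 1$, where both sides are nonnegative. Since the map $t \mapsto t^q$ is a strictly increasing bijection of~$\RR_+$ onto itself for the finite exponent $q \in \rightopen{1, \infty}$, this inequality is equivalent to the one obtained by raising both sides to the power~$q$, namely $\bp{\TopNorm{\Norm{\dual}}{k+1, q}}^q \leq \bp{\TopNorm{\Norm{\dual}}{k, q} + 1}^q$.

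Then, using the expression~\eqref{eq:top_norm_explicit} of the top $\np{k,q}$-norm together with the shorthand $\dual_{k,q}^\Sigma$ introduced in~\eqref{eq:Sigma}, I would invoke the identities $\bp{\TopNorm{\Norm{\dual}}{k+1, q}}^q = \dual_{k,q}^\Sigma + \abs{\dual_{\nu(k+1)}}^q$ and $\bp{\TopNorm{\Norm{\dual}}{k, q}}^q = \dual_{k,q}^\Sigma$. Substituting the first identity into the left-hand side of the raised inequality gives $\dual_{k,q}^\Sigma + \abs{\dual_{\nu(k+1)}}^q \leq \bp{\TopNorm{\Norm{\dual}}{k, q} + 1}^q$, and subtracting the common term $\dual_{k,q}^\Sigma = \bp{\TopNorm{\Norm{\dual}}{k, q}}^q$ from both sides yields exactly the right-hand inequality $\abs{\dual_{\nu(k+1)}}^q \leq \bp{\TopNorm{\Norm{\dual}}{k, q} + 1}^q - \bp{\TopNorm{\Norm{\dual}}{k, q}}^q$ of~\eqref{eq:top_norm_inequality}. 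As every step in this chain is reversible, this establishes the stated equivalence.

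Finally, the strict and equality versions would follow by the very same argument: since $t \mapsto t^q$ is strictly increasing on~$\RR_+$, raising to the power~$q$ preserves not only non-strict inequalities but also strict inequalities and equalities, and the subsequent cancellation of $\dual_{k,q}^\Sigma$ likewise preserves the type of relation. I do not expect any genuine obstacle here, as the result reduces to this monotonicity of the power map followed by a single cancellation; the only point requiring a little care is to confirm that raising to the power~$q$ is truly an equivalence in both directions, which is guaranteed because $t \mapsto t^q$ is a bijection of~$\RR_+$ for finite $q \geq 1$.
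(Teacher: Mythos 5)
Your proof is correct and follows essentially the same route as the paper's: both arguments move the $-1$ to the right-hand side, use the monotonicity of $x \mapsto x^q$ on $\RR_+$ to raise the inequality to the power $q$, substitute the explicit expression~\eqref{eq:top_norm_explicit} of the top $\np{k,q}$-norm, and cancel the common sum $\sum_{i=1}^k \abs{\dual_{\nu(i)}}^q$. Your remark that strict monotonicity of the power map handles the strict-inequality and equality variants matches the paper's closing observation that those cases are analogous.
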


\begin{proof}
  For $\dual \in \DUAL$ and $k \in \ic{0, d-1}$, we have that
  \begin{align*}
    \TopNorm{\Norm{\dual}}{k+1, q} - 1 \leq \TopNorm{\Norm{\dual}}{k, q} 
    &\iff 
      \Bp{\sum_{i=1}^k \abs{\dual_{\nu(i)}}^q + \abs{\dual_{\nu(k+1)}}^q}^\frac{1}{q}
      -1 \leq \TopNorm{\Norm{\dual}}{k, q} \eqfinv
      \tag{from the expression of $\TopNorm{\Norm{\cdot}}{k, q}$
      in~\eqref{eq:top_norm_explicit}}
    \\
    &\iff 
      \sum_{i=1}^k \abs{\dual_{\nu(i)}}^q + \abs{\dual_{\nu(k+1)}}^q
      \leq \bp{\TopNorm{\Norm{\dual}}{k, q} + 1}^q \eqfinv
      \tag{as the function $x \mapsto x^q$ is nondecreasing on $\RR_+$}
  \end{align*}
  so that finally, by definition~\eqref{eq:top_norm_explicit} of
  $\TopNorm{\Norm{\cdot}}{k, q}$, we get 
  \begin{equation*}
    \TopNorm{\Norm{\dual}}{k+1, q} - 1 \leq \TopNorm{\Norm{\dual}}{k, q} 
    \iff \abs{\dual_{\nu(k+1)}}^q \leq 
    \bp{\TopNorm{\Norm{\dual}}{k, q} + 1}^q 
    - \bp{\TopNorm{\Norm{\dual}}{k, q}}^q 
    \eqfinp
  \end{equation*}
  The proof of the strict inequality and equality versions
  of~\eqref{eq:admissible_dual_inequality} is analogous.
\end{proof}

We now provide explicit expressions of the sets~$\Dual_{\lzerovalue}$ 
in~\eqref{eq:admissible_dual} for the $\ell_p$ source norms 
$\TripleNorm{\cdot} = \Norm{\cdot}_p$, when $p \in \nc{1, \infty}$.

\begin{proposition}
  \label{pr:admissible_dual_equivalence}%
  Let the source norm be the $\ell_p$ norm $\TripleNorm{\cdot} = \Norm{\cdot}_p$,
  where $p \in \nc{1, \infty}$, and let $q \in \nc{1, \infty}$ 
  be such that $\frac{1}{p} + \frac{1}{q} = 1$. 
  For $\lzerovalue \in \ic{0, d}$, the set $\Dual_{\lzerovalue}$ 
   in~\eqref{eq:admissible_dual} is given by
  \begin{subequations}
    \begin{description}
    \item
      $\bullet$ if $p=1$,
      \begin{equation}
        \Dual_l = \begin{cases}
          \BALL_{\Norm{\cdot}_\infty} 
          & \text{ if } \lzerovalue = 0 \eqfinv \\
          \defset{\dual \in \RR^d}{\Norm{\dual}_\infty \geq 1}
          & \text{ if } \lzerovalue = 1 \eqfinv \\
          \emptyset & \text{ else,}
        \end{cases} \label{eq:admissible_dual_l1}%
      \end{equation} \\
    \item
      $\bullet$ if $p \in \leftopen{1, \infty}$,
      for $\dual \in \DUAL$ and $\nu$ a permutation of $\ic{1, d}$ 
      such that $\abs{\dual_{\nu(1)}} \geq \ldots \geq \abs{\dual_{\nu(d)}}$, 
      \begin{equation}
        \dual \in \Dual_{\lzerovalue} \iff 
        \begin{cases}
          \abs{\dual_{\nu(k+1)}}^q \geq \bp{\TopNorm{\Norm{\dual}}{k, q} + 1}^q 
          - \bp{\TopNorm{\Norm{\dual}}{k, q}}^q \eqsepv 
          \forall k \in \ic{0, \lzerovalue-1} 
          \eqfinv
          \\
          \abs{\dual_{\nu(\lzerovalue+1)}}^q
          \leq 
          \bp{\TopNorm{\Norm{\dual}}{\lzerovalue, q} + 1}^q 
          - \bp{\TopNorm{\Norm{\dual}}{\lzerovalue, q}}^q
          \;(\text{when}\;  \lzerovalue\not= d)
          \eqfinp
        \end{cases} 
        \label{eq:admissible_dual_equivalence}%
      \end{equation}
    \end{description}
  \end{subequations}
\end{proposition}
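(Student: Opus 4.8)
The plan is to unfold the definition~\eqref{eq:admissible_dual} of $\Dual_\lzerovalue$ and reduce it to a one-dimensional optimality condition for a suitable sequence, then read off the two cases. Using the identity $\CoordinateNormDual{\TripleNorm{\cdot}}{j} = \TopNorm{\Norm{\cdot}}{j,q}$ from Proposition~\ref{pr:table_norms} (valid for $j \in \ic{1,d}$, extended by the convention $\TopNorm{\Norm{\dual}}{0,q} = 0$), I would fix $\dual \in \DUAL$ and introduce the sequence $\phi(j) = \TopNorm{\Norm{\dual}}{j,q} - j$ on $\ic{0,d}$. By definition, $\dual \in \Dual_\lzerovalue$ holds if and only if $\lzerovalue \in \argmax_{j \in \ic{0,d}} \phi(j)$, that is, iff $\phi(\lzerovalue) \geq \phi(j)$ for every $j \in \ic{0,d}$. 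This reformulation is the backbone of the whole proof.

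For $p = 1$ (so $q = \infty$), the sequence $\phi$ is fully explicit by~\eqref{eq:top_norm_explicit}: $\phi(0) = 0$ and $\phi(j) = \Norm{\dual}_\infty - j$ for $j \in \ic{1,d}$. Since $\phi$ is strictly decreasing on $\ic{1,d}$, its maximum over $\ic{0,d}$ equals $\max\bp{0, \Norm{\dual}_\infty - 1}$. Hence $0$ is a maximizer precisely when $\Norm{\dual}_\infty \leq 1$, the index $1$ is a maximizer precisely when $\Norm{\dual}_\infty \geq 1$, and no $j \geq 2$ can ever be a maximizer because $\phi(j) < \phi(1)$ there. This yields directly the three-case description~\eqref{eq:admissible_dual_l1}, with no further machinery needed.

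The substantive case is $p \in \leftopen{1,\infty}$, i.e. $q \in \rightopen{1,\infty}$, which is exactly the range covered by Lemmas~\ref{le:admissible_dual_inequality} and~\ref{le:top_norm_inequality}. I would first invoke Lemma~\ref{le:top_norm_inequality} to rewrite each family of inequalities in the right-hand side of~\eqref{eq:admissible_dual_equivalence} as an increment condition on $\phi$: the inequality $\abs{\dual_{\nu(k+1)}}^q \geq \bp{\TopNorm{\Norm{\dual}}{k,q}+1}^q - \bp{\TopNorm{\Norm{\dual}}{k,q}}^q$ is equivalent to $\phi(k+1) \geq \phi(k)$, and the final inequality is equivalent to $\phi(\lzerovalue+1) \leq \phi(\lzerovalue)$. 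Thus the right-hand side of~\eqref{eq:admissible_dual_equivalence} asserts exactly that $\phi$ is nondecreasing on $\ic{0,\lzerovalue}$ and does not increase at the single step from $\lzerovalue$ to $\lzerovalue+1$ (when $\lzerovalue \neq d$). It then remains to prove that these ``local'' conditions are equivalent to the ``global'' optimality $\phi(\lzerovalue) = \max_{j} \phi(j)$ established in the first paragraph.

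The equivalence is where Lemma~\ref{le:admissible_dual_inequality} does the real work, since it encodes the diminishing-increment (concavity) structure of the top norm. From the local conditions to global optimality: nondecrease on $\ic{0,\lzerovalue}$ gives $\phi(j) \leq \phi(\lzerovalue)$ for $j \leq \lzerovalue$, while the single non-increase step $\phi(\lzerovalue+1) \leq \phi(\lzerovalue)$ together with the nonstrict form of Lemma~\ref{le:admissible_dual_inequality} (applied with $k = \lzerovalue$) propagates to $\phi(\lzerovalue+j) \leq \phi(\lzerovalue)$ for all $j \in \ic{1, d-\lzerovalue}$, covering the indices $j > \lzerovalue$. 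Conversely, if $\phi(\lzerovalue)$ is a global maximum then $\phi(\lzerovalue+1) \leq \phi(\lzerovalue)$ is immediate; and were there a step $\phi(k_0+1) < \phi(k_0)$ with $k_0 \in \ic{0,\lzerovalue-1}$, the strict form of Lemma~\ref{le:admissible_dual_inequality} would force $\phi(\lzerovalue) < \phi(k_0)$, contradicting maximality, so $\phi$ must be nondecreasing on $\ic{0,\lzerovalue}$. The main obstacle is precisely this propagation: one must exploit that a downward increment of $\phi$ can never be followed by a recovery, which fails for arbitrary sequences and relies entirely on the structure isolated in Lemma~\ref{le:admissible_dual_inequality}; everything else is bookkeeping around the degenerate endpoints $\lzerovalue = 0$ (empty first family) and $\lzerovalue = d$ (absent second inequality).
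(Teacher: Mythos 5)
Your proof is correct and follows essentially the same route as the paper's: the $p=1$ case by direct computation of the argmax, and the $p\in\leftopen{1,\infty}$ case by translating the right-hand side of~\eqref{eq:admissible_dual_equivalence} via Lemma~\ref{le:top_norm_inequality} into step conditions on $j \mapsto \TopNorm{\Norm{\dual}}{j,q} - j$, then propagating with the nonstrict version of Lemma~\ref{le:admissible_dual_inequality} for sufficiency and the strict version (by contraposition) for necessity, exactly as the paper does. Your auxiliary sequence $\phi$ is merely a convenient notational repackaging of the quantities the paper writes out longhand.
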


\begin{proof}
  We consider the case $p=1$.
  When the source norm is $\TripleNorm{\cdot} = \Norm{\cdot}_1$,
  we have that for $k \in \ic{1, d}$, 
  $\CoordinateNormDual{\TripleNorm{\cdot}}{k} = \Norm{\cdot}_{\infty}$
  \citep[Table 1]{Chancelier-DeLara:2022_CAPRA_OPTIMIZATION},
  and that $\CoordinateNormDual{\TripleNorm{\cdot}}{0} = 0$,
  following the convention introduced in~\cite[\S3.2]{Chancelier-DeLara:2022_CAPRA_OPTIMIZATION}.
  Therefore, from the expression of $\Dual_\lzerovalue$
  in~\eqref{eq:admissible_dual}, we get that
  \[ 
    \Dual_{\lzerovalue} = \defset{\dual \in \DUAL}{\lzerovalue \in \argmax_{j\in\ic{0,d}} 
      \bp{ \Norm{\dual}_{\infty} \boldsymbol{1}_{j \neq 0} - j}}
    = \begin{cases}
      \BALL_{\Norm{\cdot}_\infty} 
      & \text{ if } \lzerovalue = 0 \eqfinv \\
      \defset{\dual \in \RR^d}{\Norm{\dual}_\infty \geq 1}
      & \text{ if } \lzerovalue = 1 \eqfinv \\
      \emptyset & \text{ else,}
    \end{cases} 
  \]
  hence \eqref{eq:admissible_dual_l1}. 
  
  Next, we consider $p \in \leftopen{1, \infty}$,
  and proceed in two steps to prove the equivalence in~\eqref{eq:admissible_dual_equivalence}.
  
  In the first step $(\impliedby)$, 
  we take $\dual \in \DUAL$ and we consider the two following
  cases that correspond to the right-hand side in~\eqref{eq:admissible_dual_equivalence}.
  \medbreak
  $\bullet$ If $\abs{\dual_{\nu(k+1)}}^q \geq \bp{\TopNorm{\Norm{\dual}}{k, q} + 1}^q 
  - \bp{\TopNorm{\Norm{\dual}}{k, q}}^q \eqsepv \forall k \in \ic{0, \lzerovalue-1}
  \eqfinv$ 
  \medbreak
  then         
  we get that
  \begin{align*}
    \TopNorm{\Norm{\dual}}{k+1, q}
    & - 1 
      \geq 
      \TopNorm{\Norm{\dual}}{k, q} \eqsepv 
      \forall k \in \ic{0, \lzerovalue-1}  \eqsepv
      \tag{from~\eqref{eq:top_norm_inequality}}
    \\
    &\implies \TopNorm{\Norm{\dual}}{k+1, q} - (k+1) 
      \geq 
      \TopNorm{\Norm{\dual}}{k, q} - k \eqsepv 
      \forall k \in \ic{0, \lzerovalue-1}  \eqsepv \\
    &\implies \lzerovalue 
      \in 
      \argmax_{j\in\ic{0,l}} \bp{\TopNorm{\Norm{\dual}}{j, q} - j} \eqfinp 
  \end{align*}
  \medbreak

  $\bullet$ If $\lzerovalue\not=d$ and $\abs{\dual_{\nu(\lzerovalue+1)}}^q 
  \leq 
  \bp{\TopNorm{\Norm{\dual}}{\lzerovalue, q} + 1}^q 
  - \bp{\TopNorm{\Norm{\dual}}{\lzerovalue, q}}^q \eqfinv$ 
  \medbreak
  then 
  we get that
  \begin{align*}
    \TopNorm{\Norm{\dual}}{\lzerovalue+1, q}
    & - 1 
      \leq 
      \TopNorm{\Norm{\dual}}{\lzerovalue, q} \eqfinv
      \tag{from~\eqref{eq:top_norm_inequality}}
    \\
    &\implies 
      \TopNorm{\Norm{\dual}}{\lzerovalue+j, q} - j 
      \leq 
      \TopNorm{\Norm{\dual}}{\lzerovalue, q} 
      \eqsepv  
      \forall j \in \ic{1, d-\lzerovalue} 
      \eqfinv
      \tag{from~\eqref{eq:admissible_dual_inequality}}
    \\
    &\implies 
      \TopNorm{\Norm{\dual}}{\lzerovalue+j, q} - (\lzerovalue + j) 
      \leq 
      \TopNorm{\Norm{\dual}}{\lzerovalue, q} - \lzerovalue 
      \eqsepv  
      \forall j \in \ic{1, d-\lzerovalue} 
      \eqfinv 
    \\
    &\implies 
      \lzerovalue 
      \in 
      \argmax_{j\in\ic{l,d}} 
      \bp{\TopNorm{\Norm{\dual}}{j, q} - j} 
      \eqfinp
  \end{align*}
  Therefore, if the vector~$\dual$ satisfies both of the above assumptions,
  we get that 
  $\lzerovalue \in 
  \argmax_{j\in\ic{0,d}} 
  \bp{\TopNorm{\Norm{\dual}}{j, q} - j} $, and hence that
  $\dual \in \Dual_\lzerovalue$ by~\eqref{eq:admissible_dual}. This concludes the first step.
  \medskip
  
  In the second step $(\implies)$,
  we proceed by contraposition, assuming that either one of the two
  inequalities in the right-hand side of~\eqref{eq:admissible_dual_equivalence}
  is not satisfied. 
  \medbreak
  $\bullet$ If $\exists k \in \ic{0, \lzerovalue-1} \eqsepv \abs{\dual_{\nu(k+1)}}^q 
  < \bp{\TopNorm{\Norm{\dual}}{k, q} + 1}^q 
  - \bp{\TopNorm{\Norm{\dual}}{k, q}}^q \eqfinv$ 
  \medbreak
  then 
  we get that
  \begin{align*}
    \exists k
    &\in
      \ic{0, \lzerovalue-1} \eqsepv
      \TopNorm{\Norm{\dual}}{k+1, q} - 1 < \TopNorm{\Norm{\dual}}{k, q} \eqfinv
      \tag{from~\eqref{eq:top_norm_inequality} {with strict inequality}}
    \\
    &\implies
    \exists k \in \ic{0, \lzerovalue-1} \eqsepv
    \TopNorm{\Norm{\dual}}{k+j, q} - j < \TopNorm{\Norm{\dual}}{k, q}  
    \eqsepv
    \forall j \in \ic{1, d-k} \eqfinv 
    \tag{from~\eqref{eq:admissible_dual_inequality}}
    \\
    &\implies
      \exists k \in \ic{0, \lzerovalue-1} \eqsepv
      \TopNorm{\Norm{\dual}}{k+j, q} - (k+j) < \TopNorm{\Norm{\dual}}{k, q} - k 
      \eqsepv
      \forall j \in \ic{1, d-k} \eqfinv 
    \\
    &\implies
      \exists k \in \ic{0, \lzerovalue-1} \eqsepv
      \TopNorm{\Norm{\dual}}{\lzerovalue, q} - \lzerovalue < \TopNorm{\Norm{\dual}}{k, q} - k 
      \eqsepv
      \tag{as $\lzerovalue \in \nset{k+j}{j \in \ic{1, d-k}}$ since $k< l$}
    \\
    &\implies
      \exists k \in \ic{0, d} \eqsepv
      \TopNorm{\Norm{\dual}}{\lzerovalue, q} - \lzerovalue < \TopNorm{\Norm{\dual}}{k, q} - k 
      \eqsepv
      \tag{as $\ic{0, \lzerovalue-1}\subset \ic{0,d}$}
    \\
    &\implies
      \lzerovalue \notin \argmax_{j\in\ic{0,d}} 
      \bp{\TopNorm{\Norm{\dual}}{j, q} - j} \eqfinp
  \end{align*}
  \medbreak
  $\bullet$ If $\lzerovalue\not= d$ and $\abs{\dual_{\nu(\lzerovalue+1)}}^q > 
  \bp{\TopNorm{\Norm{\dual}}{\lzerovalue, q} + 1}^q 
  - \bp{\TopNorm{\Norm{\dual}}{\lzerovalue, q}}^q \eqfinv$
  \medbreak
  then we get that 
  \begin{align*}
    \TopNorm{\Norm{\dual}}{\lzerovalue+1, q}
    & - 1 
      > 
      \TopNorm{\Norm{\dual}}{\lzerovalue, q} 
      \eqsepv
      \tag{from~\eqref{eq:top_norm_inequality} {with strict inequality}}
    \\
    &\implies 
      \TopNorm{\Norm{\dual}}{\lzerovalue+1, q} - (\lzerovalue+1) 
      > 
      \TopNorm{\Norm{\dual}}{\lzerovalue, q} - \lzerovalue 
      \eqsepv  
    \\
    &\implies 
      \lzerovalue \notin \argmax_{j\in\ic{0,d}} 
      \bp{\TopNorm{\Norm{\dual}}{j, q} - j} 
      \eqfinp
  \end{align*}
  In either case, we get that $\dual \notin \Dual_{\lzerovalue}$
  by~\eqref{eq:admissible_dual}, which
  concludes the second step.
  We have finally proved
  the equivalence in~\eqref{eq:admissible_dual_equivalence}.
  \medskip
  
  This ends the proof.
\end{proof}

\subsection{Expression 
  of the normal cone~$\NormalCone_{\OriginalSupportNorm{\BALL}{p, \lzerovalue}}$}
\label{sec:explicit_normal_cone}

We now turn to giving a description of the normal cone~$\NormalCone_{\OriginalSupportNorm{\BALL}{p, \lzerovalue}}$
in~\eqref{eq:capra_subdifferential}
for the $\ell_p$ source norms 
$\TripleNorm{\cdot} = \Norm{\cdot}_p$, when $p \in \nc{1, \infty}$.
We start with the following Lemma~\ref{le:normal_cone}.

\begin{lemma}
  \label{le:normal_cone}%
  Let the source norm be the $\ell_p$ norm $\TripleNorm{\cdot} = \Norm{\cdot}_p$,
  where $p \in \nc{1, \infty}$.
  Let $\primal \in \RR^d$, $\lzerovalue = \lzero(\primal)$, $\support = \SupportMapping(\primal)$. 
  If $\lzerovalue \in \ic{1, d}$, we have that

  \begin{subequations}
    \begin{align}
      \Norm{\frac{\primal}{\OriginalSupportNorm{\Norm{\primal}}{p, \lzerovalue}}}_p 
      &= 1 \eqfinv
        \label{eq:F2}
      \\
      \dual \in \NormalCone_{\OriginalSupportNorm{\BALL}{p, \lzerovalue}}
      \Bp{\frac{\primal}{\OriginalSupportNorm{\Norm{\primal}}{p, \lzerovalue}}}
      &\iff 
        \TopNorm{\Norm{\dual}}{\lzerovalue,q}
        = \proscal{\frac{\primal}{\OriginalSupportNorm{\Norm{\primal}}{p, \lzerovalue}}}{\dual_\support}
        \eqfinv
        \label{eq:F1} 
      \\
      \dual \in \NormalCone_{\OriginalSupportNorm{\BALL}{p, \lzerovalue}}
      \Bp{\frac{\primal}{\OriginalSupportNorm{\Norm{\primal}}{p, \lzerovalue}}}
      &\implies
        \TopNorm{\Norm{\dual}}{\lzerovalue,q} 
        \leq \Norm{\dual_\support}_q
        \label{eq:F3}
        \eqfinp
    \end{align}
  \end{subequations}
\end{lemma}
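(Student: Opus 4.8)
The plan is to first establish the norm identity~\eqref{eq:F2}, from which the two normal-cone statements~\eqref{eq:F1} and~\eqref{eq:F3} follow quickly. The crux is the observation that, on a vector whose support has cardinality at most~$\lzerovalue$, the $\np{p,\lzerovalue}$-support norm coincides with the $\ell_p$ norm. I would prove this directly from the variational formula given by the definition of the support norm as the dual norm of the top $\np{\lzerovalue, q}$-norm, namely
\[
  \OriginalSupportNorm{\Norm{\primal}}{p, \lzerovalue}
  = \sup_{\TopNorm{\Norm{\dual}}{\lzerovalue, q} \leq 1} \proscal{\primal}{\dual}
  \eqfinp
\]
Since $\primal = \primal_\support$ with $\cardinal{\support} = \lzerovalue$, for any $\dual$ one has $\proscal{\primal}{\dual} = \proscal{\primal_\support}{\dual_\support}$, while $\TopNorm{\Norm{\dual}}{\lzerovalue, q} \geq \Norm{\dual_\support}_q$, because the top $\np{\lzerovalue,q}$-norm aggregates the $\lzerovalue$ largest terms $\abs{\dual_{\nu(i)}}^q$ whereas $\Norm{\dual_\support}_q$ aggregates only the $\lzerovalue = \cardinal{\support}$ terms indexed by~$\support$. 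Hence, on the feasible set, Hölder's inequality gives $\proscal{\primal}{\dual} \leq \Norm{\primal}_p \Norm{\dual_\support}_q \leq \Norm{\primal}_p$, so $\OriginalSupportNorm{\Norm{\primal}}{p, \lzerovalue} \leq \Norm{\primal}_p$. The reverse inequality is obtained by choosing a Hölder-extremal dual vector to $\primal_\support$, supported on~$\support$ and normalized so that $\Norm{\dual_\support}_q = 1$; such a $\dual$ has support of cardinality at most~$\lzerovalue$, so $\TopNorm{\Norm{\dual}}{\lzerovalue, q} = \Norm{\dual_\support}_q = 1$ is feasible and attains $\proscal{\primal}{\dual} = \Norm{\primal}_p$. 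This yields $\OriginalSupportNorm{\Norm{\primal}}{p, \lzerovalue} = \Norm{\primal}_p$, which is exactly~\eqref{eq:F2}. I would stress that the argument is uniform over $p \in \nc{1, \infty}$: for $q = \infty$ the top norm reduces to $\Norm{\cdot}_\infty$ and the inequality $\TopNorm{\Norm{\dual}}{\lzerovalue, \infty} = \Norm{\dual}_\infty \geq \Norm{\dual_\support}_\infty$ still holds, and Hölder covers the $\ell_1$--$\ell_\infty$ endpoints.

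For~\eqref{eq:F1}, I would invoke the standard characterization of the normal cone to a unit ball recalled in Appendix~\ref{app:background_on_norms}: for a norm $N$ with dual norm $N^\star$ and a point $v$ with $N(v) = 1$, one has $\dual \in \NormalCone_{\BALL_N}(v) \iff N^\star(\dual) = \proscal{\dual}{v}$. Indeed $\dual$ lies in the normal cone iff $\proscal{\dual}{y} \leq \proscal{\dual}{v}$ for all $y$ in the ball, that is iff $N^\star(\dual) = \sup_{N(y)\leq 1}\proscal{\dual}{y} \leq \proscal{\dual}{v}$, the reverse inequality being automatic since $N(v)=1$. Here $N = \OriginalSupportNorm{\Norm{\cdot}}{p, \lzerovalue}$ has dual norm $N^\star = \TopNorm{\Norm{\cdot}}{\lzerovalue, q}$ by definition, and the point $v = \primal / \OriginalSupportNorm{\Norm{\primal}}{p, \lzerovalue}$ satisfies $N(v) = 1$ by positive homogeneity, while $v = \primal/\Norm{\primal}_p$ by~\eqref{eq:F2}. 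Since $v$ is supported on $\support$, we have $\proscal{\dual}{v} = \proscal{\dual_\support}{v}$, which turns the characterization into the claimed equivalence $\dual \in \NormalCone_{\OriginalSupportNorm{\BALL}{p, \lzerovalue}}(v) \iff \TopNorm{\Norm{\dual}}{\lzerovalue, q} = \proscal{v}{\dual_\support}$.

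Finally,~\eqref{eq:F3} is an immediate consequence of~\eqref{eq:F1} via Hölder: if $\dual$ lies in the normal cone, then $\TopNorm{\Norm{\dual}}{\lzerovalue, q} = \proscal{v}{\dual_\support} = \proscal{\primal}{\dual_\support}/\Norm{\primal}_p \leq \Norm{\primal_\support}_p \Norm{\dual_\support}_q / \Norm{\primal}_p = \Norm{\dual_\support}_q$, using $\primal = \primal_\support$ and $\Norm{\primal_\support}_p = \Norm{\primal}_p$. I expect the main (and essentially only) obstacle to be the proof of~\eqref{eq:F2}; once the support norm is identified with $\Norm{\cdot}_p$ on the support of~$\primal$, both~\eqref{eq:F1} and~\eqref{eq:F3} reduce to the generic normal-cone characterization followed by a single application of Hölder's inequality. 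The only point requiring mild care is the endpoint exponents $q \in \na{1, \infty}$, where the top-norm formula~\eqref{eq:top_norm_explicit} is piecewise, but as noted the estimates carry over unchanged.
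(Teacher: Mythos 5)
Your proposal is correct, and its treatment of \eqref{eq:F1} and \eqref{eq:F3} is essentially the paper's own argument: both characterize the normal cone to the unit ball of the support norm via the equality case of the dual-norm inequality (this is \eqref{eq:couple_TripleNorm-dual_and_normal_cone}, applied with the support norm, whose dual is the top $\np{\lzerovalue,q}$-norm), use the normalization $\OriginalSupportNorm{\Norm{\cdot}}{p,\lzerovalue}$ and the fact that the normalized point is supported on~$\support$ to obtain \eqref{eq:F1}, and then deduce \eqref{eq:F3} from \eqref{eq:F1}, \eqref{eq:F2} and one application of H{\"o}lder. Where you genuinely diverge is \eqref{eq:F2}: the paper disposes of it by citation, invoking \cite[Proposition~3.5]{Chancelier-DeLara:2022_CAPRA_OPTIMIZATION} (for $\lzero(\primal)=\lzerovalue$ the source norm equals its coordinate-$\lzerovalue$ norm) together with Proposition~\ref{pr:table_norms}, which identifies $\CoordinateNorm{\TripleNorm{\cdot}}{\lzerovalue}$ with $\OriginalSupportNorm{\Norm{\cdot}}{p,\lzerovalue}$, and then concludes by homogeneity. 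You instead prove the identity $\OriginalSupportNorm{\Norm{\primal}}{p,\lzerovalue}=\Norm{\primal}_p$ on vectors with $\lzero(\primal)\leq\lzerovalue$ from scratch, using the variational formula $\OriginalSupportNorm{\Norm{\primal}}{p,\lzerovalue}=\sup_{\TopNorm{\Norm{\dual}}{\lzerovalue,q}\leq 1}\proscal{\primal}{\dual}$: the inequality $\TopNorm{\Norm{\dual}}{\lzerovalue,q}\geq\Norm{\dual_\support}_q$ plus H{\"o}lder gives the upper bound $\Norm{\primal}_p$, and a H{\"o}lder-extremal vector supported on~$\support$ is feasible (its top $\np{\lzerovalue,q}$-norm equals its $\ell_q$ norm, since it has at most $\lzerovalue$ nonzero entries) and attains it. This is a valid, more elementary and self-contained route, and your endpoint checks ($q\in\na{1,\infty}$) are sound; what the paper's route buys is brevity and the fact that \eqref{eq:F2} is exhibited as a special case of a statement about coordinate-$k$ norms valid for an \emph{arbitrary} source norm, not only $\ell_p$. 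One small remark, which is not a gap since the paper relies on it equally: identifying the dual of the support norm with the top norm (rather than the definitional converse) uses that the bidual of a norm on $\RR^d$ is the norm itself.
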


\begin{proof}
  Let $q \in \nc{1, \infty}$ 
  be such that $\frac{1}{p} + \frac{1}{q} = 1$.
  Let $\primal \in \RR^d$, $\lzerovalue = \lzero(\primal)$ and
  $\support = \SupportMapping(\primal)$. {As we assume that $l\in \ic{1, d}$, we have that
    $\primal\not= 0$ and we set $\primalbis = \frac{\primal}{\OriginalSupportNorm{\Norm{\primal}}{p, \lzerovalue}}$.}
    
  First, we prove~\eqref{eq:F2}. {We have that $\lzerovalue \geq 1$ and
  $\lzero(\primalbis) = \lzero(\primal) = \lzerovalue$. Thus,
  using~\cite[Proposition~3.5]{Chancelier-DeLara:2022_CAPRA_OPTIMIZATION},
  we obtain that $\TripleNorm{\primalbis} =
  \CoordinateNorm{\TripleNorm{\primalbis}}{\lzerovalue}$}.
  Thus, from Proposition~\ref{pr:table_norms} we deduce that
  $\Norm{\primalbis}_p = \OriginalSupportNorm{\Norm{\primalbis}}{p,l} = 1$.
  
  Second, we prove~\eqref{eq:F1}.
  We have the equivalence
  \begin{align}
    \dual \in \NormalCone_{\OriginalSupportNorm{\BALL}{p, \lzerovalue}}\np{\primalbis}
    &\iff 
      \OriginalSupportNorm{\Norm{\primalbis}}{p, \lzerovalue}
      \TopNorm{\Norm{\dual}}{\lzerovalue,q}
      = \proscal{\primalbis}{\dual}
      \tag{by definition~\eqref{eq:couple_TripleNorm-dual_and_normal_cone} of the normal cone}\\
    &\iff
      \TopNorm{\Norm{\dual}}{\lzerovalue,q}
      = \proscal{\primalbis}{\dual_\support}
      \eqfinp
      \tag{from $\OriginalSupportNorm{\Norm{\primalbis}}{p,\lzerovalue} = 1$
      and $\support = \SupportMapping(\primalbis)$}
  \end{align}
  
  Third, we prove~\eqref{eq:F3}.
  We have that
  \begin{align}
    \dual \in \NormalCone_{\OriginalSupportNorm{\BALL}{p, \lzerovalue}}\np{\primalbis}
    &\iff
      \TopNorm{\Norm{\dual}}{\lzerovalue,q}
      = \proscal{\primalbis}{\dual_\support}_{\RR^\lzerovalue}
      \tag{from \eqref{eq:F1}}
    \\
    &\implies
      \TopNorm{\Norm{\dual}}{\lzerovalue,q} 
      \leq \Norm{\dual_\support}_q
      \eqfinp
      \tag{from the H{\"o}lder inequality and \eqref{eq:F2}}
  \end{align}
  
  This ends the proof.
\end{proof}

We now provide an explicit expression of the normal cone
in~\eqref{eq:capra_subdifferential}
for the $\ell_p$ source norms 
$\TripleNorm{\cdot} = \Norm{\cdot}_p$, when $p \in \nc{1, \infty}$.

\begin{proposition}
  Let the source norm be the $\ell_p$ norm $\TripleNorm{\cdot} = \Norm{\cdot}_p$,
  where $p \in \nc{1, \infty}$.
  Let $\primal \in \RR^d$, $\lzerovalue = \lzero(\primal)$ and $\support = \SupportMapping(\primal)$.
  If $\lzerovalue \in \ic{1, d}$, 
  we have that
  \begin{equation}
    \dual \in \NormalCone_{\OriginalSupportNorm{\BALL}{p, \lzerovalue}} \Bp{\frac{\primal}{\OriginalSupportNorm{\Norm{\primal}}{p, \lzerovalue}}}
    \iff
    \begin{cases}
      \dual_\support \in
      \NormalCone_{\BALL_{\Norm{\cdot}_p}}
      \np{\frac{\primal}{\Norm{\primal}_p}} 
      \eqfinv \\
      \abs{\dual_j} \leq \min_{i \in \support} \abs{\dual_i} \eqsepv 
      \forall j \notin \support \eqfinp
    \end{cases}\label{eq:normal_cone_equivalence}
  \end{equation}
  \label{pr:normal_cone}
\end{proposition}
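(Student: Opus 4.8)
The plan is to run the whole argument through Lemma~\ref{le:normal_cone}, whose equivalence~\eqref{eq:F1} already reduces membership in $\NormalCone_{\OriginalSupportNorm{\BALL}{p, \lzerovalue}}$ to a single scalar identity. Working throughout under the standing hypothesis $\lzerovalue \in \ic{1, d}$ (so $\primal \neq 0$) and writing $\primalbis = \frac{\primal}{\OriginalSupportNorm{\Norm{\primal}}{p, \lzerovalue}}$ as in the proof of that lemma, I would first record from~\eqref{eq:F2} that $\Norm{\primalbis}_p = 1$. Since $\primalbis$ is a positive multiple of $\primal$ of unit $\ell_p$-norm, it is \emph{the} such multiple, namely $\primalbis = \frac{\primal}{\Norm{\primal}_p}$; hence the base point of the $\ell_p$-ball normal cone on the right-hand side of~\eqref{eq:normal_cone_equivalence} is exactly $\primalbis$. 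By~\eqref{eq:F1}, the left-hand side of~\eqref{eq:normal_cone_equivalence} is then equivalent to the scalar identity $\TopNorm{\Norm{\dual}}{\lzerovalue, q} = \proscal{\primalbis}{\dual_\support}$.

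The core of the proof is the chain $\proscal{\primalbis}{\dual_\support} \leq \Norm{\dual_\support}_q \leq \TopNorm{\Norm{\dual}}{\lzerovalue, q}$, valid for every $\dual \in \RR^d$: the left inequality is Hölder's inequality on the support (using $\Norm{\primalbis}_p = 1$, as already exploited for~\eqref{eq:F3}), and the right inequality holds because $\support$ is one particular set of $\lzerovalue$ indices while the top $(\lzerovalue, q)$-norm maximizes the $\ell_q$-mass over all such sets. Consequently the identity $\TopNorm{\Norm{\dual}}{\lzerovalue, q} = \proscal{\primalbis}{\dual_\support}$ forces the whole chain to be equalities, i.e.\ it is equivalent to the conjunction of $\proscal{\primalbis}{\dual_\support} = \Norm{\dual_\support}_q$ and $\Norm{\dual_\support}_q = \TopNorm{\Norm{\dual}}{\lzerovalue, q}$. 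I would then match each equality with one line of~\eqref{eq:normal_cone_equivalence}: the Hölder equality $\proscal{\primalbis}{\dual_\support} = \Norm{\dual_\support}_q$ is, by the normal-cone characterization~\eqref{eq:couple_TripleNorm-dual_and_normal_cone} at the $\ell_p$-unit vector $\primalbis = \frac{\primal}{\Norm{\primal}_p}$, precisely $\dual_\support \in \NormalCone_{\BALL_{\Norm{\cdot}_p}}\np{\frac{\primal}{\Norm{\primal}_p}}$; and the norm equality $\Norm{\dual_\support}_q = \TopNorm{\Norm{\dual}}{\lzerovalue, q}$ asserts that $\support$ realizes a set of $\lzerovalue$ largest-magnitude coordinates of $\dual$.

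It then remains to prove the equivalence $\Norm{\dual_\support}_q = \TopNorm{\Norm{\dual}}{\lzerovalue, q} \iff \abs{\dual_j} \leq \min_{i\in\support}\abs{\dual_i}$ for all $j \notin \support$. One direction is immediate: if the support magnitudes dominate the off-support ones, then $\support$ is a top-$\lzerovalue$ index set and both norms coincide (this also covers $\lzerovalue = d$, where the off-support condition is vacuous and both norms equal $\Norm{\dual}_q$). For the converse, when $q \in \rightopen{1, \infty}$ I would argue by a swap: if some $j \notin \support$ satisfied $\abs{\dual_j} > \abs{\dual_{i_0}} = \min_{i\in\support}\abs{\dual_i}$, then replacing $i_0$ by $j$ would strictly increase the $\ell_q$-mass, since $x \mapsto x^q$ is strictly increasing on $\RR_+$, giving $\TopNorm{\Norm{\dual}}{\lzerovalue, q} > \Norm{\dual_\support}_q$ and contradicting the assumed equality.

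The step I expect to be delicate is exactly this converse at the endpoint $q = \infty$ (that is, $p = 1$): there $\TopNorm{\Norm{\dual}}{\lzerovalue, \infty} = \Norm{\dual}_\infty$ by~\eqref{eq:top_norm_explicit}, so $\Norm{\dual_\support}_\infty = \Norm{\dual}_\infty$ only says the global maximum is attained somewhere on $\support$, and the swap argument no longer yields domination. The way out is to use the \emph{first} equality, which we have simultaneously available: $\proscal{\primalbis}{\dual_\support} = \Norm{\dual_\support}_\infty$ is an $\ell_1$--$\ell_\infty$ Hölder equality at the vector $\primalbis$ all of whose support coordinates are nonzero, and tightness forces $\abs{\dual_i} = \Norm{\dual_\support}_\infty$ for every $i \in \support$. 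Hence $\min_{i\in\support}\abs{\dual_i} = \Norm{\dual_\support}_\infty = \Norm{\dual}_\infty \geq \abs{\dual_j}$ for all $j \notin \support$, which is the desired domination. Combining the two directions with the reduction of the first two paragraphs yields~\eqref{eq:normal_cone_equivalence}.
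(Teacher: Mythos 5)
Your proof is correct, and its skeleton coincides with the paper's: both arguments funnel membership through Lemma~\ref{le:normal_cone}, writing $\primalbis = \primal/\OriginalSupportNorm{\Norm{\primal}}{p,\lzerovalue}$, reduce it to the pair of equalities $\proscal{\primalbis}{\dual_\support} = \Norm{\dual_\support}_q$ and $\Norm{\dual_\support}_q = \TopNorm{\Norm{\dual}}{\lzerovalue, q}$, and translate the first via~\eqref{eq:couple_TripleNorm-dual_and_normal_cone}. The differences are in execution, and both favor you. First, where the paper derives the intermediate identity~\eqref{eq:CN_normal_cone} by a case disjunction on $\cardinal{\SupportMapping(\dual)}$ versus $\lzerovalue$ (using~\eqref{eq:F3}), you get it in one stroke from the squeeze $\proscal{\primalbis}{\dual_\support} \leq \Norm{\dual_\support}_q \leq \TopNorm{\Norm{\dual}}{\lzerovalue, q}$ combined with~\eqref{eq:F1}; this is shorter and dispenses with the cases. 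Second, and more substantively, the paper's final step matches $\TopNorm{\Norm{\dual}}{\lzerovalue, q} = \Norm{\dual_\support}_q$ with the domination condition citing only the expression of $\TopNorm{\Norm{\cdot}}{\lzerovalue,q}$; that line-by-line equivalence holds for $q \in \rightopen{1,\infty}$ (your swap argument supplies the detail the paper omits) but is false in isolation at $q = \infty$: for $d=3$, $\lzerovalue = 2$, $\support = \na{1,2}$ and $\dual = (1,\, 0.2,\, 0.5)$ one has $\TopNorm{\Norm{\dual}}{2,\infty} = \Norm{\dual_\support}_\infty = 1$ while domination fails. You identified exactly this endpoint and repaired it correctly: at $p=1$ the H{\"o}lder equality forces every support coordinate of $\dual$ to have the common magnitude $\Norm{\dual_\support}_\infty$, which, together with $\Norm{\dual_\support}_\infty = \Norm{\dual}_\infty$, restores domination (the overall conjunction-level equivalence, which is what the proposition asserts, is thus true, but it genuinely requires using the two equalities jointly). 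So you are not on a different route, but your write-up is more rigorous than the paper's precisely where the paper is terse, and the explicit $q=\infty$ argument is worth keeping.
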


\begin{proof}
  Let $q \in \nc{1, \infty}$ 
  be such that $\frac{1}{p} + \frac{1}{q} = 1$.
  Let $\primal \in \RR^d$, $\lzerovalue = \lzero(\primal)$ and
  $\support = \SupportMapping(\primal)$,
  and let us set 
  $\primalbis = \frac{\primal}{\OriginalSupportNorm{\Norm{\primal}}{p,
      \lzerovalue}}$.
  Let $\dual \in \RR^d$, and let us set $\dualsupport = \SupportMapping(\dual)$.
  
  First, we prove that
  \begin{equation}
    \dual \in \NormalCone_{\OriginalSupportNorm{\BALL}{p,\lzerovalue}}
    \np{\primalbis} 
    \implies 
    \TopNorm{\Norm{\dual}}{l, q} = \Norm{\dual_\support}_q
    \eqfinp
    \label{eq:CN_normal_cone}
  \end{equation}
  We consider two cases.
  In the first case, we assume that $\cardinal{\dualsupport} =
  \cardinal{\SupportMapping(\dual)}
  \leq \cardinal{\SupportMapping(\primal)}=\cardinal{\support} = l$.
  Since the vector~$\dual$ has at most $l$~nonzero coordinates, 
  we get that $\TopNorm{\Norm{\dual}}{l, q} = \Norm{\dual_\dualsupport}_q$
    from the expression~\eqref{eq:top_norm_explicit} of $\TopNorm{\Norm{\cdot}}{l, q}$.
  It follows that
  \begin{align*}
    \dual \in \NormalCone_{\OriginalSupportNorm{\BALL}{p,\lzerovalue}}
    \np{\primalbis}
    &\implies \Norm{\dual_\dualsupport}_q \leq
      \Norm{\dual_\support}_q
      \eqfinv
      \tag{from \eqref{eq:F3}}
    \\
    &\implies
      \Norm{\dual_\support}_q
      =
      \Norm{\dual_\dualsupport}_q
      =
      \TopNorm{\Norm{\dual}}{l, q}
      \eqfinp
      \tag{from $\Norm{\dual_\support}_q
      \leq\Norm{\dual}_q=\Norm{\dual_\dualsupport}_q$, because
      \( \cardinal{\dualsupport} =        \cardinal{\SupportMapping(\dual)} \)}
  \end{align*}
  In the second case, we assume that 
  $\cardinal{\dualsupport} =
  \cardinal{\SupportMapping(\dual)} > \cardinal{\SupportMapping(\primal)}=\cardinal{\support} = l$.
  Since the vector~$\dual$ has more than~$l$ nonzero coordinates, 
  we get that	
  $\TopNorm{\Norm{\dual}}{l, q} \geq \Norm{\dual_\support}_q$
  from the expression~\eqref{eq:top_norm_explicit} of~$\TopNorm{\Norm{\cdot}}{l, q}$.
  Combined with~\eqref{eq:F3}, we deduce that
  $\dual \in \NormalCone_{\OriginalSupportNorm{\BALL}{p,\lzerovalue}}
  \np{\primalbis}
  \implies
  \TopNorm{\Norm{\dual}}{\lzerovalue,q} 
  = \Norm{\dual_\support}_q$.
  Gathering the conclusions of both cases, we obtain~\eqref{eq:CN_normal_cone}.
  
  Second, we prove~\eqref{eq:normal_cone_equivalence}.
  Observing that $\Norm{\primalbis}_p = 1$ from~\eqref{eq:F2}, we have that
  \begin{align*}
    \dual \in \NormalCone_{\OriginalSupportNorm{\BALL}{p,\lzerovalue}}
    \np{\primalbis}
    &\iff 
      \begin{cases}
	\Norm{\primalbis}_p
	\Norm{\dual_\support}_q
	= \proscal{\primalbis}{\dual_\support}
	\eqfinv \\
	\TopNorm{\Norm{\dual}}{l, q} = \Norm{\dual_\support}_q
	\eqfinv
      \end{cases}
    \tag{$\implies$ from \eqref{eq:F1}, \eqref{eq:CN_normal_cone}; 
    $\impliedby$ from \eqref{eq:F1}}
    \\
    &\iff
      \begin{cases}
	\dual_\support \in
	\NormalCone_{\BALL_{\Norm{\cdot}_p}}
	\np{\frac{\primal}{\Norm{\primal}_p}} 
	\eqfinv \\
	\abs{\dual_j} \leq \min_{i \in \support} \abs{\dual_i} \eqsepv 
	\forall j \notin \support \eqfinv
      \end{cases}
  \end{align*}
  by definition~\eqref{eq:couple_TripleNorm-dual_and_normal_cone} 
  of the normal cone, observing that
  $\primalbis = \frac{\primal}{\Norm{\primal}_p}$
  from~\eqref{eq:F2}, and by the expression
  of $\TopNorm{\Norm{\cdot}}{l, q}$
  in Proposition~\ref{pr:table_norms}.
  \medskip
  
  This ends the proof.
\end{proof}

\subsection{Proof of Theorem~\ref{th:capra_convexity_and_subdifferentiability}}
\label{subsec:Proof_of_Theorem}

For the proof of Theorem~\ref{th:capra_convexity_and_subdifferentiability},
we use the following result that is essentially an application of~\cite[Proposition~10.1]{Singer:1997}
to the special case of \Capra\ conjugacies.

\begin{fact}
  \label{fact:domain}
  Let $\TripleNorm{\cdot}$ be a norm on $\RR^d$
  and $\CouplingCapra$ be the \Capra\ coupling 
  as in Definition~\ref{de:Capra}, inducing
  the definitions of the \Capra-biconjugate~\eqref{eq:Fenchel-Moreau_biconjugate}
  and the \Capra-subdifferential~\eqref{eq:Capra_subdifferential}.
  We have that
  \begin{equation}
    \SFMbi{ \lzero }{\CouplingCapra}(\primal) \neq \lzero(\primal)
    \implies \partial_{\CouplingCapra} \lzero(\primal) = \emptyset
    \eqsepv \forall \primal \in \PRIMAL \eqfinp
  \end{equation}
\end{fact}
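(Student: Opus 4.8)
The plan is to prove the contrapositive, namely that \Capra-subdifferentiability of $\lzero$ at a point forces equality between $\lzero$ and its \Capra-biconjugate there. This is the \Capra\ specialization of the classical Fenchel--Moreau principle that a nonempty subdifferential certifies biconjugate equality, so the argument is short and self-contained given the inequality~\eqref{eq:biconjugate_inequality} and the defining relation~\eqref{eq:Capra_subdifferential}.

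First, I would fix $\primal \in \PRIMAL$, assume $\partial_{\CouplingCapra} \lzero(\primal) \neq \emptyset$, and pick some $\dual \in \partial_{\CouplingCapra} \lzero(\primal)$. By the definition~\eqref{eq:Capra_subdifferential} of the \Capra-subdifferential, this selection satisfies
\[
  \SFM{\lzero}{\CouplingCapra}(\dual) = \CouplingCapra\np{\primal, \dual} - \lzero(\primal) \eqfinp
\]
Since $\lzero$ takes only finite values on $\RR^d$ and $\CouplingCapra\np{\primal, \dual}$ is finite by Definition~\ref{de:Capra}, both terms on the right are finite; this is the only point requiring a little care, as it legitimizes the cancellation performed in the next step and avoids any $\pm\infty$ ambiguity in the extended reals.

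Second, I would insert this particular $\dual$ into the supremum~\eqref{eq:Fenchel-Moreau_biconjugate} defining the \Capra-biconjugate. As the biconjugate is a supremum over all dual vectors, it dominates the value attained at this one choice of $\dual$, whence
\[
  \SFMbi{\lzero}{\CouplingCapra}(\primal)
  \geq \CouplingCapra\np{\primal, \dual} - \SFM{\lzero}{\CouplingCapra}(\dual)
  = \CouplingCapra\np{\primal, \dual} - \bp{\CouplingCapra\np{\primal, \dual} - \lzero(\primal)}
  = \lzero(\primal) \eqfinp
\]
Third, combining this lower bound with the general upper bound $\SFMbi{\lzero}{\CouplingCapra}(\primal) \leq \lzero(\primal)$ from~\eqref{eq:biconjugate_inequality} yields $\SFMbi{\lzero}{\CouplingCapra}(\primal) = \lzero(\primal)$. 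Taking the contrapositive gives precisely the stated implication. I do not anticipate any genuine obstacle: the whole argument reduces to the elementary duality computation above, and the finiteness of $\lzero$ removes the only technical subtlety that the extended-real setting could otherwise introduce.
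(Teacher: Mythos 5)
Your proof is correct and is essentially the paper's own approach: the paper simply invokes the classical result \cite[Proposition~10.1]{Singer:1997} that a nonempty subdifferential at a point forces equality of the function with its biconjugate there, and your argument is exactly the spelled-out proof of that principle specialized to the \Capra\ coupling (with the finiteness of $\lzero$ and of $\CouplingCapra$ correctly noted to justify the cancellation in the extended reals).
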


We now turn to the proof of Theorem~\ref{th:capra_convexity_and_subdifferentiability}.
\vspace{\topsep}

\begin{proof}
  The proof is structured as follows: for the source norms $\TripleNorm{\cdot} = \Norm{\cdot}_p$ with $p \in \nc{1, \infty}$,
  \begin{description}
  \item \textit{(i)} first, we give the expression
    of the \Capra-biconjugate $\SFMbi{ \lzero }{\CouplingCapra}$,  
  \item \textit{(ii)} second, we give the expression of 
    the \Capra-subdifferential $\partial_{\CouplingCapra} \lzero\np{\primal}$ at $\primal = 0$,
  \item \textit{(iii)} third, we give the expression of 
    the \Capra-subdifferential $\partial_{\CouplingCapra} \lzero\np{\primal}$ at $\primal \neq 0$,
  \item \textit{(iv)} fourth, we give the domain of the \Capra-subdifferential
    $\partial_{\CouplingCapra} \lzero$.
  \end{description}
  
  \textit{(i)} For $p \in \open{1, \infty}$, 
  the norm $\Norm{\cdot}_p$
  and its dual norm $\Norm{\cdot}_q$ (with $\frac{1}{p} + \frac{1}{q} = 1$)
  are orthant-strictly monotonic,
  following Definition~\ref{de:orthant-monotonic},
  so that $\SFMbi{ \lzero }{\CouplingCapra} = \lzero$,
  from Theorem~\ref{th:capra_convexity}.
  Turning to the case $p \in \na{1, \infty}$,
  we recall that,
  from~\cite[Proposition~4.4]{Chancelier-DeLara:2022_CAPRA_OPTIMIZATION}
  and Proposition~\ref{pr:table_norms}, if $q \in \na{1,\infty}$
  is such that $\frac{1}{p} + \frac{1}{q} = 1$, then we get that
  \begin{equation}
    \lzero^{\CouplingCapra}(\dual)
    = \max_{j=1,\ldots,d} 
    \bp{\TopNorm{\Norm{\dual}}{j, q} - j}^+
    \eqsepv \forall \dual \in \DUAL \eqfinp
    \label{eq:lzero_conjugate}%
  \end{equation}
  
  First, we consider $p = 1$.
  From~\eqref{eq:lzero_conjugate} and~\eqref{eq:top_norm_explicit},
  we have that 
  \begin{align*}
    \lzero^{\CouplingCapra}(\dual)
    &= \max_{j\in\ic{1,d}} 
      \bp{\Norm{\dual}_\infty - j}^+
      = \bp{\Norm{\dual}_\infty - 1}^+
      \eqsepv \forall \dual \in \DUAL \eqfinp
      \intertext{Thus, by definition of the \Capra-biconjugate 
      	in~\eqref{eq:Fenchel-Moreau_biconjugate},
      	we have that $\SFMbi{ \lzero }{\CouplingCapra}(0) = 0$,
      	and that for any \( \primal \in \PRIMAL \setminus \na{0} \),}
    \SFMbi{ \lzero }{\CouplingCapra}(\primal)
    &= \sup_{ \dual \in \DUAL } 
      \Bp{ \frac{\proscal{\primal}{\dual}}{\Norm{\primal}_1} 
      - \bp{\Norm{\dual}_\infty - 1}^+ }
    \\
    &=
      \max
      \Bp{
      \sup_{\Norm{\dual}_\infty \leq 1} \frac{\proscal{\primal}{\dual}}{\Norm{\primal}_1} \eqsepv
      1 + \sup_{\Norm{\dual}_\infty \geq 1} \frac{\proscal{\primal}{\dual}}{\Norm{\primal}_1} - \Norm{\dual}_\infty}
    \\
    &=
      1 
      \eqfinv
  \end{align*}
  since
  $\sup_{\Norm{\dual}_\infty \leq 1}
  \proscal{\primal}{\dual} = \Norm{\primal}_1$,
  by $\Norm{\cdot}_1 = \bp{\Norm{\cdot}_\infty}^*$,
  and $\proscal{\primal}{\dual} \leq \Norm{\primal}_1 \Norm{\dual}_\infty$,
  by H{\"o}lder's inequality.
  This proves~\eqref{eq:biconjugate_1}.
  
  Second, we consider $p = \infty$.
  From~\eqref{eq:lzero_conjugate} and~\eqref{eq:top_norm_explicit},
  for $\dual \in \DUAL$
  and $\nu$ a permutation of $\ic{1, d}$ such that 
  $\abs{\dual_{\nu(1)}} \geq \ldots \geq \abs{\dual_{\nu(d)}}$, we have
  that 
  \begin{align*}
    \lzero^{\CouplingCapra}(\dual)
    &= \max_{j\in\ic{1,d}} 
      \bgp{\Bp{\sum_{k=1}^j \abs{\dual_{\nu(k)}}} - j}^+
      = 
      \sum_{k=1}^d \np{\abs{\dual_{\nu(k)}} - 1} 
      \boldsymbol{1}_{\abs{\dual_{\nu(k)}} \geq 1}
      = \sum_{k=1}^d \np{\abs{\dual_{k}} - 1} 
      \boldsymbol{1}_{\abs{\dual_{k}} \geq 1}
      \eqsepv
      \forall \dual \in \DUAL \eqfinp
      \intertext{Thus, by definition of the \Capra-biconjugate 
      	in~\eqref{eq:Fenchel-Moreau_biconjugate},
      	we have that $\SFMbi{ \lzero }{\CouplingCapra}(0) = 0$,
      	and that for any \( \primal \in \PRIMAL \setminus \na{0} \),} 
    \SFMbi{ \lzero }{\CouplingCapra}(\primal)
    &= \sup_{ \dual \in \DUAL } 
      \Bp{ \frac{\proscal{\primal}{\dual}}{\Norm{\primal}_\infty} 
      - \sum_{k=1}^d \np{\abs{\dual_{k}} - 1} 
      \boldsymbol{1}_{\abs{\dual_{k}} \geq 1} }
    \\
    &= \sum_{k=1}^d
      \sup_{ \dual_k \in \RR } 
      \Bp{ \frac{\primal_k\dual_k}{\Norm{\primal}_\infty} 
      -  \np{\abs{\dual_{k}} - 1} 
      \boldsymbol{1}_{\abs{\dual_{k}} \geq 1} }
    \\
    &= \sum_{k=1}^d \max 
      \Bp{
      \sup_{ \abs{\dual_k} \leq 1}
      \frac{\primal_k\dual_k}{\Norm{\primal}_\infty} \eqsepv
      1 +
      \sup_{ \abs{\dual_k} \geq 1 }
      \frac{\primal_k\dual_k}{\Norm{\primal}_\infty} - \abs{\dual_k}
      }
     \\
    &= \sum_{k=1}^d \frac{\abs{\primal_k}}{\Norm{\primal}_\infty}
      = \frac{\Norm{\primal}_1}{\Norm{\primal}_\infty}
      \eqfinv
  \end{align*}
  since, using similar arguments as above,
  $\sup_{ \abs{\dual_k} \leq 1}\primal_k\dual_k = \abs{\primal_k}$,
  and 
  \[
    1 +
  \sup_{ \abs{\dual_k} \geq 1 }
  \frac{\primal_k\dual_k}{\Norm{\primal}_\infty} - \abs{\dual_k}
  \leq 
  1 +
  \sup_{ \abs{\dual_k} \geq 1 }
  \frac{\abs{\primal_k\dual_k}}{\Norm{\primal}_\infty} - \abs{\dual_k}
  =
  1 +
  \sup_{ \abs{\dual_k} \geq 1 }
  \bp{\frac{\abs{\primal_k}}{\Norm{\primal}_\infty} - 1}\abs{\dual_k}
  = \frac{\abs{\primal_k}}{\Norm{\primal}_\infty}
  \eqfinp
  \]
  This proves~\eqref{eq:biconjugate_infty}. 
  
  \medbreak
  \textit{(ii)} Let us recall that 
  $\partial_{\CouplingCapra} \lzero(0) = 
  \bigcap_{j \in \ic{1,d}} j \BALL_{(j),\star}^{\mathrm{\FlatRR}}$,
  from \eqref{eq:capra_subdifferential_at_0}. 
  If $p=1$,
  from~\eqref{eq:table_norms} we get that 
  $\CoordinateNormDual{\TripleNorm{\cdot}}{j} 
  = \Norm{\cdot}_\infty$, $\forall j \in \ic{1,d}$.
  We deduce that $\partial_{\CouplingCapra} \lzero(0) =
  \BALL_{\Norm{\cdot}_\infty}$.   
  We now assume that $p \in \leftopen{1, \infty}$.
  From~\eqref{eq:table_norms}, we get that 
  $\CoordinateNormDual{\TripleNorm{\cdot}}{j}=
  \TopNorm{\Norm{\cdot}}{j, q}$,
  $\forall j \in \ic{1,d}$
  (with $\frac{1}{p} + \frac{1}{q} = 1$, $q \in \rightopen{1, \infty}$).
  For $j=1$, from~\eqref{eq:top_norm_explicit}, we get that 
  $\TopNorm{\Norm{\cdot}}{1, q} = \Norm{\cdot}_\infty$,
  hence that $\TopNorm{\BALL}{1,q} = \BALL_{\Norm{\cdot}_\infty}$.
  Letting $j > 1$, we prove the inclusion $\BALL_{\Norm{\cdot}_\infty} \subseteq
  j\TopNorm{\BALL}{j,q}$.
  Indeed, we have that 
  \begin{align*}
    \dual \in \BALL_{\Norm{\cdot}_\infty}
    &\implies
      \abs{\dual_{\nu(1)}}^q \leq 1 \eqfinv
      \tag{where $\abs{\dual_{\nu(1)}} = \Norm{\dual}_\infty$}
    \\
    &\implies
      \sum_{i=1}^j \abs{\dual_{\nu(1)}}^q \leq \sum_{i=1}^j 1 = j
      \eqfinv
      \tag{where $\abs{\dual_{\nu(1)}} \geq \ldots \geq \abs{\dual_{\nu(d)}}$}
    \\
    &\implies
      \Bp{\sum_{i=1}^j \abs{\dual_{\nu(1)}}^q}^\frac{1}{q} \leq j^\frac{1}{q} \eqfinv
    \\
    &\implies
      \TopNorm{\Norm{\dual}}{j, q} \leq j
      \tag{by definition of $\TopNorm{\Norm{\cdot}}{j, q}$
      in~\eqref{eq:top_norm_explicit}
      and from $j \geq j^\frac{1}{q}$}
    \\
    &\implies
      \dual \in j\TopNorm{\BALL}{j,q}
      \eqfinp
  \end{align*}
  We conclude that $\partial_{\CouplingCapra} \lzero(0) =
  \bigcap_{j \in \ic{1,d}} \TopNorm{\BALL}{j,q} =
  \BALL_{\Norm{\cdot}_\infty} \cap \Bp{\bigcap_{j \in \ic{2,d}} \TopNorm{\BALL}{j,q} }=
  \BALL_{\Norm{\cdot}_\infty}$.
  
  \medbreak
  \textit{(iii)}
  Let us recall that, 
  for $\primal \neq 0$ and $\lzero(\primal) = \lzerovalue$,
  we have that 
  $\partial_{\CouplingCapra} \lzero(\primal) 
  =
  \NormalCone_{\OriginalSupportNorm{\BALL}{p, \lzerovalue}} \Bp{\frac{\primal}{\OriginalSupportNorm{\Norm{\primal}}{p, \lzerovalue}}}
  \cap
  \Dual_{\lzerovalue}$,  from~\eqref{eq:capra_subdifferential}.
  If $p \in \leftopen{1, \infty}$,
  the expressions of $\partial_{\CouplingCapra} \lzero(\primal)$
  in~\eqref{eq:explicit_capra_subdifferential_lp}
  and in~\eqref{eq:explicit_capra_subdifferential_l_infty}
  are obtained combining Proposition~\ref{pr:normal_cone}
  and Proposition~\ref{pr:admissible_dual_equivalence}.
  If $p=1$,
  for $\lzerovalue \geq 2$, 
  $\Dual_\lzerovalue = \emptyset$ from~\eqref{eq:admissible_dual_l1}
  and thus
  $\partial_{\CouplingCapra} \lzero(\primal) = \emptyset$.
  We now turn to the case $\lzerovalue = 1$,
  denoting $\support = \SupportMapping(\primal) = \na{k}$ where $k \in \ic{1, d}$.
  We have that
  \begin{align*}
    \dual \in \NormalCone_{\OriginalSupportNorm{\BALL}{1, 1}} \Bp{\frac{\primal}{\OriginalSupportNorm{\Norm{\primal}}{1, 1}}}
    &\iff 
      \begin{cases}
        \dual_\support \in \NormalCone_{\BALL_{\Norm{\cdot}_1}}
        \np{\frac{\primal}{\Norm{\primal}_1}} \eqfinv
        \\
        \abs{\dual_j} \leq \min_{i \in \support} \abs{\dual_i} \eqsepv \forall j \notin \support \eqfinv
      \end{cases}
    \tag{from Proposition~\ref{pr:normal_cone}}
    \\
    &\iff
      \begin{cases}
        \Norm{\primal}_1 \Norm{\dual_\support}_\infty = \proscal{\primal}{\dual_\support}  \eqfinv
        \\
        \Norm{\dual_\support}_\infty = \Norm{\dual}_\infty \eqfinv
      \end{cases}
    \tag{from~\eqref{eq:couple_TripleNorm-dual_and_normal_cone} and by definition of $\Norm{\cdot}_\infty$}
    \\
    &\iff
      \Norm{\primal}_1 \Norm{\dual}_\infty = \proscal{\primal}{\dual}  \eqfinv
      \tag{from $\proscal{\primal}{\dual} = \primal_k \dual_k$ and $\Norm{\primal}_1 = \abs{\primal_k}$}
    \\
    &\iff
      \dual \in \NormalCone_{\BALL_{\Norm{\cdot}_1}}
      \np{\frac{\primal}{\Norm{\primal}_1}} \eqfinv
      \tag{from~~\eqref{eq:couple_TripleNorm-dual_and_normal_cone}}
  \end{align*}
  therefore, we deduce from~\eqref{eq:admissible_dual_l1}
  the expression of $\partial_{\CouplingCapra} \lzero(\primal)$
  in~\eqref{eq:explicit_capra_subdifferential_l1}.
  
  \medbreak
  \textit{(iv)} For $p \in \open{1, \infty}$, 
  the norm $\Norm{\cdot}_p$
  and its dual norm $\Norm{\cdot}_q$ (with $\frac{1}{p} + \frac{1}{q} = 1$)
  are orthant-strictly monotonic,
  following Definition~\ref{de:orthant-monotonic},
  so that $\dom\bp{\partial_{\CouplingCapra} \lzero} = \PRIMAL$,
  from Theorem~\ref{th:capra_convexity}.
  We now turn to the case $p \in \na{1, \infty}$.
  
  First, we consider $p = 1$.
  Given the expression of $\SFMbi{ \lzero }{\CouplingCapra}$
  in~\eqref{eq:biconjugate_1}, 
  $\SFMbi{ \lzero }{\CouplingCapra}(\primal) = \lzero(\primal) \iff \lzero(\primal) \leq 1$.
  We deduce from Fact~\ref{fact:domain} that 
  $\dom\bp{\partial_{\CouplingCapra} \lzero} \subseteq \defset{\primal \in \PRIMAL}{\lzero(\primal) \leq 1}$.
  We prove the reciprocal inclusion.
  We already know from~\eqref{eq:explicit_capra_subdifferential_l1}
  that $\partial_{\CouplingCapra} \lzero(0) \neq \emptyset$.
  Let $\primal \in \PRIMAL$ be such that $\lzero(\primal) = 1$. 
  There exists $k \in \ic{1,d}$ such that
  $\SupportMapping(\primal) = \na{k}$.
  Let us introduce $\dual \in \DUAL$ such that $\SupportMapping(\dual) = \na{k}$ with 
  $\dual_k \in \na{-1, 1}$ and $\primal_k \dual_k = \abs{\primal_k}$.
  It follows that $\Norm{\dual}_\infty = 1$ and $\Norm{\primal}_1 \Norm{\dual}_\infty = \primal_k\dual_k = \proscal{\primal}{\dual}$
  and thus that $\dual \in \NormalCone_{\BALL_{\Norm{\cdot}_1}}
  \np{\frac{\primal}{\Norm{\primal}_1}}$, from~\eqref{eq:couple_TripleNorm-dual_and_normal_cone}.
  We deduce from~\eqref{eq:explicit_capra_subdifferential_l1} that $\dual \in \partial_{\CouplingCapra} \lzero(\primal)$,
  hence that $\partial_{\CouplingCapra} \lzero(\primal) \neq \emptyset$. 
  This proves the reciprocal inclusion, and we conclude that
  $\dom\bp{\partial_{\CouplingCapra} \lzero} = \defset{\primal \in \PRIMAL}{\lzero(\primal) \leq 1}$.
  
  Second, we consider $p = \infty$.
  Given the expression of $\SFMbi{ \lzero }{\CouplingCapra}$
  in~\eqref{eq:biconjugate_infty}, we have that $\SFMbi{ \lzero }{\CouplingCapra}(0) = \lzero(0)$,
  and for $\primal \neq 0$,
  \begin{align*}
    \SFMbi{ \lzero }{\CouplingCapra}(\primal) = \lzero(\primal) 
    &\iff 
      \lzerovalue = \sum_{i=1}^l \frac{\abs{\primal_{\nu(i)}}}{\abs{\primal_{\nu(1)}}}
      \eqfinv \tag{where $\lzero(\primal) = \lzerovalue$ and $\abs{\primal_{\nu(1)}} \geq \ldots \geq \abs{\primal_{\nu(d)}}$} 
    \\
    &\iff 
      \begin{cases}
        \abs{\primal_{\nu(k)}} = \abs{\primal_{\nu(1)}} \eqsepv \forall k \in \ic{1,\lzerovalue}
        \eqfinv \\
        \primal_{\nu(k)} = 0 \eqsepv \forall k \in \ic{\lzerovalue+1,d}
        \; (\text{when } l \not=d)
        \eqfinp
      \end{cases}
  \end{align*}
  We deduce that $\SFMbi{ \lzero }{\CouplingCapra}(\primal) = \lzero(\primal)
  \iff \primal \in \cup_{\lambda > 0} \na{-\lambda, 0, \lambda}^d $,
  and thus from Fact~\ref{fact:domain} that 
  $\dom\bp{\partial_{\CouplingCapra} \lzero} \subseteq \cup_{\lambda > 0} \na{-\lambda, 0, \lambda}^d$.
  We prove the reciprocal inclusion.
  We already know from~\eqref{eq:explicit_capra_subdifferential_at_0_l_infty}
  that $\partial_{\CouplingCapra} \lzero(0) \neq \emptyset$.
  Let $\primal \in \cup_{\lambda > 0} \na{-\lambda, 0, \lambda}^d$ be such that $\primal \neq 0$,
  and $\nu$ be a permutation of $\ic{1,d}$ such that $\abs{\primal_{\nu(1)}} \geq \ldots \geq \abs{\primal_{\nu(d)}}$.
  Let us introduce $\dual \in \DUAL$ such that
  $\abs{\dual_{\nu(k)}} = 1 \eqsepv \forall k \in \ic{1,\lzerovalue}$ and
  $\abs{\dual_{\nu(k)}} = 0 \eqsepv \forall k \in \ic{\lzerovalue+1, d}$. 
  It follows that, denoting $\support = \SupportMapping(\primal)$,
  $\Norm{\primal}_\infty\Norm{\dual_{\support}}_1 = \lambda \lzerovalue = \proscal{\primal}{\dual_\support}$,
  and thus that $\dual_{\support} \in \NormalCone_{\BALL_{\Norm{\cdot}_\infty}}
  \np{\frac{\primal}{\Norm{\primal}_\infty}}$, from~\eqref{eq:couple_TripleNorm-dual_and_normal_cone}.
  We deduce from~\eqref{eq:explicit_capra_subdifferential_l_infty} that $\dual \in \partial_{\CouplingCapra} \lzero(\primal)$,
  hence that $\partial_{\CouplingCapra} \lzero(\primal) \neq \emptyset$. 
  This proves the reciprocal inclusion, and we conclude that
  $\dom\bp{\partial_{\CouplingCapra} \lzero} = \cup_{\lambda > 0} \na{-\lambda,
    0, \lambda}^d$.
  \medskip

  This ends the proof. 
\end{proof}

\section{Graphical representations and discussion}
\label{sec:visualization_and_discussion}%

First, we provide graphical representations of 
the \Capra-subdifferential of the $\lzero$ pseudonorm 
in \S\ref{sec:numerical_example}.
Second, we compare our expression of
$\partial_{\CouplingCapra}\lzero$ 
with other notions of generalized subdifferential 
for the $\lzero$ pseudonorm and illustrate one of its applications in \S\ref{sec:capra_subdifferential_discussion}.

\subsection{Visualization with the $\ell_2$ source norm}
\label{sec:numerical_example}

We detail the \Capra-subdifferential of~$\lzero$ for the $\ell_2$ source norm
$\TripleNorm{\cdot} = \Norm{\cdot}_2$.
According to Theorem~\ref{th:capra_convexity_and_subdifferentiability},
we have that
\begin{subequations}
  \label{eq:capra_l2}%
  \begin{align}
    \partial_{\CouplingCapra} \lzero(0) 
    &= 
      \BALL_{\Norm{\cdot}_\infty}
      \eqfinv 
      \intertext{and for $\primal \neq 0$, $\dual \in \DUAL$,
      denoting $\lzerovalue = \lzero(\primal)$,
      $\support = \SupportMapping(\primal)$, and
      $\nu$ a permutation of $\ic{1, d}$ such that 
      $\abs{\dual_{\nu(1)}} \geq \ldots \geq \abs{\dual_{\nu(d)}}$,}
      \dual \in
      \partial_{\CouplingCapra} \lzero(\primal) 
    &\iff
      \begin{cases}
	\dual_\support = \lambda\primal \eqsepv
	\lambda \geq 0
	\eqfinv \\	
	\abs{\dual_j} \leq \min_{i \in \support} \abs{\dual_i} \eqsepv 
	\forall j \notin \support \eqfinv \\
	\abs{\dual_{\nu(k+1)}}^2 \geq \bp{\TopNorm{\Norm{\dual}}{k, 2} + 1}^2 
	- \bp{\TopNorm{\Norm{\dual}}{k, 2}}^2 \eqsepv 
	\forall k \in \ic{0, \lzerovalue-1} 
	\eqfinv
	\\
	\abs{\dual_{\nu(\lzerovalue+1)}}^2
	\leq 
	\bp{\TopNorm{\Norm{\dual}}{\lzerovalue, 2} + 1}^2 
	- \bp{\TopNorm{\Norm{\dual}}{\lzerovalue, 2}}^2
	\eqfinp
      \end{cases}
  \end{align}
\end{subequations}

We illustrate in Figure~\ref{fig:capra_subdifferential_2D}
the \Capra-subdifferentials obtained with~\eqref{eq:capra_l2}
in the two-dimensional case where $\lzero : \RR^2 \to \na{0,1,2}$.
In Figure~\ref{fig:capra_subdiff_on_points}, we display the
\Capra-subdifferential of $\lzero$ at three typical points,
covering the three possible cases in $\RR^2$,
with $\lzero(\primal) = 0$ (green color),
$\lzero(\primal) = 1$ (red color), and
$\lzero(\primal) = 2$ (blue color).
Then, using the same colors, 
we display in Figure~\ref{fig:capra_subdiff_on_shphere}
the \Capra-subdifferential of $\lzero$ at
all points in $\RR^2$.

\pgfmathsetmacro{\lw}{0.6} 

\begin{figure}[htpb]
  
  \begin{subfigure}[b]{\linewidth}
    \centering
    \resizebox{8cm}{!}{
      \begin{tikzpicture}
        \begin{axis}[ 
          ticks=none,
          axis lines = middle,
          axis line style={->},
          ymin=-12, ymax=12,
          xmin=-12, xmax=12,
          xlabel={$\primal_1$},
          ylabel={$\primal_2$},
          x label style={at={(axis description cs:0.95,0.42)},anchor=center},
          y label style={at={(axis description cs:0.42,0.95)},anchor=center},
          axis equal image
          ]
          
          \addplot[name path=inftyR, mygreen, line width=\lw pt] coordinates {(1,-1) (1,1)};
          \addplot[name path=inftyL, mygreen, line width=\lw pt] coordinates {(-1,-1) (-1,1)};
          \addplot[name path=inftyT, mygreen, line width=\lw pt] coordinates {(-1,1) (1,1)};
          \addplot[name path=inftyB, mygreen, line width=\lw pt] coordinates {(-1,-1) (1,-1)};
          \addplot[color=mygreen, opacity=0.5]fill between[of=inftyB and inftyT, soft clip={domain=-1:1}];
          \addplot[black, mark=*, only marks, mark size=1.5pt] coordinates {(0,0)};
          
          \addplot[red, mark=] coordinates {(1,-1) (1,1)};
          \addplot[name path=linear+, red, domain=1:1+sqrt(2)] {x};
          \addplot[name path=linear-, red, domain=1:1+sqrt(2)] {-x};
          \addplot[color=red!30, opacity=0.5]fill between[of=linear+ and linear-, soft clip={domain=1:2.41}];
          \addplot[name path=parabolic+, red, domain=1+sqrt(2):12] {sqrt(2*x+1)};
          \addplot[name path=parabolic-, red, domain=1+sqrt(2):12] {-sqrt(2*x+1)};
          \addplot[color=red!30, opacity=0.5]fill between[of=parabolic+ and parabolic-, soft clip={domain=2.4:12}];
          \addplot[black, mark=*, only marks, mark size=1.5pt] coordinates {(1,0)};
          
          
          \addplot[black, mark=*, only marks, mark size=1.5pt] coordinates {(-{sqrt(3)/2},-0.5)};
          \addplot[black!30!blue, mark=, thick] coordinates {({(1+cos(sqrt(3)/2))*4*(-sqrt(3)/2)},{(1+cos(sqrt(3)/2))*4*(-0.5)}) (-30*(sqrt(3)/2),-30*0.5)};
          
          \draw (axis cs:0,0) circle [black, radius=1];
        \end{axis}
      \end{tikzpicture}}
    \caption{$\textcolor{mygreen}{\partial_{\CouplingCapra} \lzero \np{0, 0}} \eqsepv
      \rouge{\partial_{\CouplingCapra} \lzero \np{1, 0}} \eqsepv
      \textcolor{myblue}{\partial_{\CouplingCapra} \lzero
        \np{-\frac{\sqrt{3}}{2}, -\frac{1}{2}}}$} 
    \label{fig:capra_subdiff_on_points}
  \end{subfigure}
  
  \hfill
  
  \begin{subfigure}[b]{\linewidth}
    \centering
    \resizebox{8cm}{!}{
      \begin{tikzpicture}
        \begin{axis}[ 
          ticks=none,
          axis lines = middle,
          axis line style={->},
          ymin=-12, ymax=12,
          xmin=-12, xmax=12,
          xlabel={$\primal_1$},
          ylabel={$\primal_2$},
          x label style={at={(axis description cs:0.95,0.42)},anchor=center},
          y label style={at={(axis description cs:0.42,0.95)},anchor=center},
          axis equal image,
          width=8cm,
          height=8cm
          ]
          
          \addplot[red, mark=, line width=\lw pt] coordinates {(1,-1) (1,1)};
          \addplot[name path=linear+, opacity=0., domain=1:1+sqrt(2)] {x};
          \addplot[name path=linear-, opacity=0., domain=1:1+sqrt(2)] {-x};
          \addplot[color=red!30, opacity=0.5]fill between[of=linear+ and linear-, soft clip={domain=1:2.41}];
          \addplot[name path=parabolic+, red, domain=1+sqrt(2):12,  line width=\lw pt] {sqrt(2*x+1)};
          \addplot[name path=parabolic-, red, domain=1+sqrt(2):12, line width=\lw pt] {-sqrt(2*x+1)};
          \addplot[color=red!30, opacity=0.5]fill between[of=parabolic+ and parabolic-, soft clip={domain=2.4:12}];
          \addplot[name path=polar+, blue, dash pattern=on 2pt off 2pt, dash phase=2pt, data cs=polar, domain=5:45, line width=\lw pt] {(1+cos(x)) / (sin(x)*sin(x))};
          \addplot[name path=bisectrice+, opacity=0., domain=1+sqrt(2):12] {x};
          \addplot[color=blue!30, opacity=0.5]fill between[of=bisectrice+ and parabolic+, soft clip={domain=2.4:12}];
          \addplot[name path=polar1, blue, dash pattern=on 2pt off 2pt, data cs=polar, domain=315:355, line width=\lw pt] {(1+cos(x)) / (sin(x)*sin(x))};
          \addplot[name path=bisectrice-, opacity=0., domain=1+sqrt(2):12] {-x};
          \addplot[color=blue!30, opacity=0.5]fill between[of=bisectrice- and parabolic-, soft clip={domain=2.4:12}];
          \addplot[name path=inftyR, mygreen, dash pattern=on 2pt off 2pt, dash phase=2pt, line width=\lw pt] coordinates {(1,-1) (1,1)};
          
          
          \addplot[red, mark=, line width=\lw pt] coordinates {(-1,-1) (-1,1)};
          \addplot[name path=linear+, opacity=0., domain=-1-sqrt(2):-1] {x};
          \addplot[name path=linear-, opacity=0., domain=-1-sqrt(2):-1] {-x};
          \addplot[color=red!30, opacity=0.5]fill between[of=linear+ and linear-, soft clip={domain=-1:-2.41}];
          \addplot[name path=parabolic+, red, domain=-1-sqrt(2):-12, line width=\lw pt] {sqrt(-2*x+1)};
          \addplot[name path=parabolic-, red, domain=-1-sqrt(2):-12, line width=\lw pt] {-sqrt(-2*x+1)};
          \addplot[color=red!30, opacity=0.5]fill between[of=parabolic+ and parabolic-, soft clip={domain=-2.4:-12}];
          \addplot[name path=polar+, blue, dash pattern=on 2pt off 2pt, dash phase=2pt, data cs=polar, domain=135:180, line width=\lw pt] {(1-cos(x)) / (sin(x)*sin(x))};
          \addplot[name path=bisectrice+, opacity=0., domain=-1-sqrt(2):-12] {-x};
          \addplot[color=blue!30, opacity=0.5]fill between[of=bisectrice+ and parabolic+, soft clip={domain=-2.4:-12}];
          \addplot[name path=polar1, blue, dash pattern=on 2pt off 2pt, data cs=polar, domain=180:225, line width=\lw pt] {(1-cos(x)) / (sin(x)*sin(x))};
          \addplot[name path=bisectrice-, opacity=0., domain=-1-sqrt(2):-12] {x};
          \addplot[color=blue!30, opacity=0.5]fill between[of=bisectrice- and parabolic-, soft clip={domain=-2.4:-12}];
          \addplot[name path=inftyL, mygreen, dash pattern=on 2pt off 2pt, dash phase=2pt, line width=\lw pt] coordinates {(-1,-1) (-1,1)};
          
          
          \pgfmathsetmacro\xx{1+sqrt(2.)}
          
          \addplot[red, mark=, line width=\lw pt] coordinates {(-1,1) (1,1)};
          \addplot[name path=cone+, opacity=0., domain=-12:12] {max(-x,x,1)};
          \addplot[name path=parabolic+, red, domain=\xx:12, line width=\lw pt] {(x^2-1)/2};
          \addplot[blue, dash pattern=on 2pt off 2pt, dash phase=2pt, domain=\xx:12, line width=\lw pt] {(x^2-1)/2};
          \addplot[name path=parabolic-, red, domain=-\xx:-12, line width=\lw pt] {(x^2-1)/2};
          \addplot[blue, dash pattern=on 2pt off 2pt, dash phase=2pt, domain=-\xx:-12, line width=\lw pt] {(x^2-1)/2};
          \addplot[name path=top, opacity=0., domain=-12:12] {12};
          \addplot[color=red!30, opacity=0.5]fill between[of=cone+ and top, soft clip={domain=-\xx:\xx}];
          \addplot[color=red!30, opacity=0.5]fill between[of=parabolic+ and top, soft clip={domain=\xx:12}];
          \addplot[color=red!30, opacity=0.5]fill between[of=parabolic- and top, soft clip={domain=-\xx:-12}];
          \addplot[color=blue!30, opacity=0.5]fill between[of=parabolic+ and cone+, soft clip={domain=\xx:12}];
          \addplot[color=blue!30, opacity=0.5]fill between[of=parabolic- and cone+, soft clip={domain=-\xx:-12}];
          \addplot[name path=inftyT, mygreen, dash pattern=on 2pt off 2pt, dash phase=2pt, line width=\lw pt] coordinates {(-1,1) (1,1)};
          
          \addplot[red, mark=, line width=\lw pt] coordinates {(-1,-1) (1,-1)};
          \addplot[name path=cone-, opacity=0., domain=-12:12] {min(-x,x,-1)};
          \addplot[name path=parabolic+, red, domain=1+sqrt(2):12, line width=\lw pt] {-(x^2-1)/2};
          \addplot[blue, dash pattern=on 2pt off 2pt, dash phase=2pt, domain=1+sqrt(2):12, line width=\lw pt] {-(x^2-1)/2};
          \addplot[name path=parabolic-, red, domain=-1-sqrt(2):-12, line width=\lw pt] {-(x^2-1)/2};
          \addplot[blue, dash pattern=on 2pt off 2pt, dash phase=2pt, domain=-1-sqrt(2):-12, line width=\lw pt] {-(x^2-1)/2};
          \addplot[name path=bottom, opacity=0., domain=-12:12] {-12};
          \addplot[color=red!30, opacity=0.5]fill between[of=cone- and bottom, soft clip={domain=-\xx:\xx}];
          \addplot[color=red!30, opacity=0.5]fill between[of=parabolic+ and bottom, soft clip={domain=\xx:12}];
          \addplot[color=red!30, opacity=0.5]fill between[of=parabolic- and bottom, soft clip={domain=-\xx:-12}];
          \addplot[color=blue!30, opacity=0.5]fill between[of=parabolic+ and cone-, soft clip={domain=\xx:12}];
          \addplot[color=blue!30, opacity=0.5]fill between[of=parabolic- and cone-, soft clip={domain=-\xx:-12}];
          \addplot[name path=inftyB, mygreen, dash pattern=on 2pt off 2pt, dash phase=2pt, line width=\lw pt] coordinates {(-1,-1) (1,-1)};
          
          \addplot[color=mygreen, opacity=0.5]fill between[of=inftyB and inftyT, soft clip={domain=-1:1}];
          
        \end{axis}
      \end{tikzpicture}}
    \caption{$\textcolor{mygreen}{\partial_{\CouplingCapra} \lzero (0)}
      \bigcup
      \rouge{\Ba{\underset{\lzero(\primal)=1}{\bigcup}
          \partial_{\CouplingCapra} \lzero (\primal)}}
      \bigcup
      \bleu{\Ba{\underset{\lzero(\primal)=2}{\bigcup}
          \partial_{\CouplingCapra} \lzero (\primal)}}$} 
    \label{fig:capra_subdiff_on_shphere}
  \end{subfigure}
  \caption{\Capra-subdifferential of the $\lzero$ pseudonorm
    in $\RR^2$ with the $\ell_2$ source norm
    $\TripleNorm{\cdot} = \Norm{\cdot}_2$,
    illustrated for three typical points (Figure~\ref{fig:capra_subdiff_on_points})
    and for all points in $\RR^2$ (Figure~\ref{fig:capra_subdiff_on_shphere})}
  \label{fig:capra_subdifferential_2D}
\end{figure}

\subsection{Discussion}
\label{sec:capra_subdifferential_discussion}

First,
we compare the \Capra-subdifferential
of the $\lzero$ pseudonorm given
in Theorem~\ref{th:capra_convexity_and_subdifferentiability}
with other notions of subdifferentials.
We recall that, for $\lzero$, the standard subdifferential
of convex analysis obtained with the Fenchel conjugacy
is given by (see \cite[Table 3]{Chancelier-DeLara:2022_CAPRA_OPTIMIZATION})
\begin{equation}
  \partial \lzero(0) = \na{0}
  \text{ and } \enspace
  \partial \lzero(\primal) = \emptyset \eqsepv
  \forall \primal \in \PRIMAL \setminus \na{0}
  \eqfinp
  \label{eq:subdifferential_l0}%
\end{equation}
We also recall further notions of generalized subdifferentials
obtained for the $\lzero$ pseudonorm.
We refer the reader to \cite{Le:2013}
for the definitions of the Fr{\'e}chet, viscosity, proximal, Clarke
and limiting subdifferentials, where the author establishes
that all these notions coincide for the $\lzero$ pseudonorm,
and are equal to 
the set-valued mapping
\begin{equation}
  \mathcal{D} : \PRIMAL \rightrightarrows \DUAL \eqsepv 
  \primal \mapsto 
  \defset{\dual \in \DUAL}{\dual_\support = 0}
  \eqfinv
  \label{eq:generalized_subdiff}%
\end{equation}
where $\support = \SupportMapping(\primal)$,
from \cite[Theorems 1, 2]{Le:2013}.
We deduce that
the \Capra-subdifferential
of the $\lzero$ pseudonorm is significantly different
from previous notions of generalized subdifferentials of $\lzero$,
summarized by~\eqref{eq:subdifferential_l0} and~\eqref{eq:generalized_subdiff}.
In particular, whereas \( \defset{\dual \in \DUAL}{\dual_\support = 0} \) is a
vector subspace, the \Capra-subdifferential
\( \partial_{\CouplingCapra} \lzero(\primal) \) is a closed convex set,
but not a vector subspace.
However, we recall that the \Capra-subdifferential of $\lzero$
is related to the standard subdifferential of $\Lzero$ --- 
a proper closed convex function first introduced in~\cite[\S4.1]{Chancelier-DeLara:2021_ECAPRA_JCA}
for the Euclidean source norm,
then generalized in \cite[Equation~(19), Proposition 3]{Chancelier-DeLara:2021_SVVA} --- 
that ``factorizes'' the $\lzero$ pseudonorm, in the sense that $\lzero = \Lzero
\circ \normalized$,
where $\normalized : \PRIMAL \to \SPHERE_{\TripleNorm{\cdot}}\cup\na{0}$
is the normalization mapping such that $\CouplingCapra\np{\cdot, \cdot} = \proscal{\normalized(\cdot)}{\cdot}$
in~\eqref{eq:coupling_CAPRA}.
Indeed, by application of~\cite[Item~(c), Proposition~3]{Chancelier-DeLara:2021_SVVA}, 
when the source norm $\TripleNorm{\cdot}$ is a $\ell_p$ norm
with $p \in \open{1, \infty}$,
the \Capra-subdifferential of $\lzero$
and the standard subdifferential of $\Lzero$
coincide on the unit sphere, that is,
\begin{equation}
  p \in \open{1, \infty} \text{ and } 
  \Norm{\primal}_p=1 \implies 
  \partial_{\CouplingCapra} \lzero(\primal)
  = \partial \Lzero(\primal)
  \eqfinp
  \label{eq:subdifferential_of_L0}%
\end{equation}
It follows that, for these $\ell_p$ source norms,
  Equation~\eqref{eq:explicit_capra_subdifferential_lp} ---
which provides explicit formulas for~$\partial_{\CouplingCapra} \lzero(\primal)$
--- then also gives,  on the unit
sphere~$\SPHERE_{\Norm{\cdot}_p}$,
explicit formulas for the standard subdifferential of the proper closed convex function~$\Lzero$. 

Second, 
we argue that, since the $\lzero$ pseudonorm 
displays the \Capra-convex properties stated
in Theorem~\ref{th:capra_convexity_and_subdifferentiability},
the \Capra-subdifferential is relevant to obtain
lower approximations of the $\lzero$ pseudonorm. 
We recall that nonconvex continuous approximations of the $\lzero$ pseudonorm
have gained a lot of interest in the field of 
sparse optimization, especially due to applications in machine learning
\cite{Soubies:2015, Qu:2016, Wen:2018}.
The lower approximation of $\lzero$ that we propose next
can be seen as a generalization of polyhedral
lower approximations obtained for a proper, lower semicontinuous and convex
function: here, the maximum of a finite number of affine functions 
now translates into ``polyhedral-like''~\cite[p.~114]{Singer:1997} functions
that are the maximum of a finite number of so-called \emph{\Capra-affine} functions, that is,
functions of the form \( \primal \mapsto \CouplingCapra(\primal, \dual) - z \)
for fixed \( \dual\in\RR^d \) and $z \in \barRR$. 

Let the source norm $\TripleNorm{\cdot}$ be a $\ell_p$ norm,
with $p \in \open{1, \infty}$,
and let
$\na{\primal_i}_{i\in I}$ and $\na{\dual_i}_{i\in I}$ be two collections of points 
such that for $i\in I$,
$\primal_i \in \PRIMAL$ 
and $\dual_i \in \partial_{\CouplingCapra} \lzero(\primal_i)$.
By definition of the \Capra-biconjugate in~\eqref{eq:Fenchel-Moreau_biconjugate},
we have that
\begin{equation}
  \max_{i \in I} \Bp{\CouplingCapra(\primal, \dual_i) - \SFM{ \lzero }{\CouplingCapra}(\dual_i)}
  \leq
  \sup_{\dual \in \DUAL} \Bp{ \CouplingCapra\np{\primal,\dual} 
    -\SFM{ \lzero }{\CouplingCapra}(\dual)  } 
  =
  \SFMbi{ \lzero }{\CouplingCapra}(\primal)
  \eqsepv \forall \primal \in \PRIMAL
  \eqfinp
  \label{eq:polyhedral_inequality}%
\end{equation}
Therefore, we deduce from~\eqref{eq:biconjugate_inequality} that the function
\begin{equation}
  \underline{\lzero} : \PRIMAL \to \RR \eqsepv
  \primal \mapsto \max_{i \in I} \Bp{\CouplingCapra(\primal, \dual_i) - \SFM{ \lzero }{\CouplingCapra}(\dual_i)}
  \label{eq:l0_lower}%
\end{equation}
gives a lower bound for~$\lzero$.
Moreover, by definition of the \Capra-subdifferential in~\eqref{eq:Capra_subdifferential},
we have that for, $i\in I$, 
\begin{equation}
  \CouplingCapra(\primal_i, \dual_i) - \SFM{ \lzero }{\CouplingCapra}(\dual_i) = \lzero(\primal_i)
  \eqfinv
\end{equation}
so that
this lower bound is exact (tight) at the points in $\na{\primal_i}_{i\in I}$,
in the sense that $\underline{\lzero}(\primal_i) = \lzero(\primal_i)$.
Thus, we can tighten the inequality in~\eqref{eq:polyhedral_inequality}
by enlarging the collections $\na{\primal_i}_{i\in I}$ and $\na{\dual_i}_{i\in I}$.
We provide an example of such a lower approximation of~$\lzero$
in Figure~\ref{fig:l0_lower}, using the $\ell_2$ source norm $\TripleNorm{\cdot} = \Norm{\cdot}_2$. 
By definition of $\underline{\lzero}$
in~\eqref{eq:l0_lower} and of the \Capra\ coupling in~\eqref{eq:Capra_conjugacies},
it is straightforward to see that $\underline{\lzero}$ is constant along rays,
so that we only give its representation on $\SPHERE\cup\na{0}$ (orange color).
Observe that, at the sample points $\na{\primal_i}_{i\in I}$ (black dots),
$\underline{\lzero}$ takes the same values as $\lzero$ (blue color, Figure~\ref{fig:l0}).

\begin{figure}
  \begin{subfigure}[b]{0.45\linewidth}
    \includegraphics[width=\linewidth]{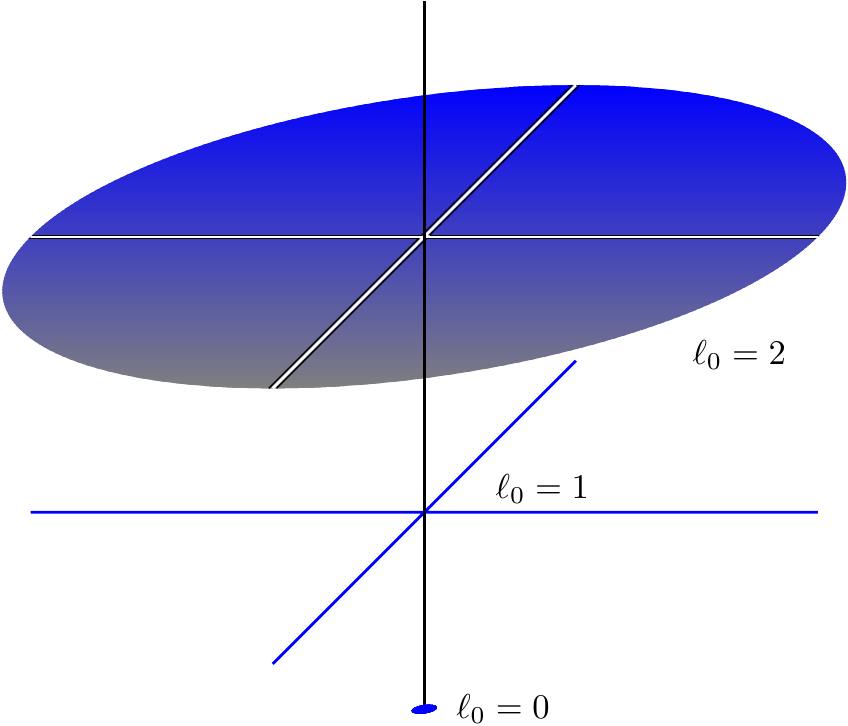}
    \caption{$\lzero : \RR^2 \to \na{0,1,2}$}
    \label{fig:l0}
  \end{subfigure}
  \hfill
  \begin{subfigure}[b]{0.45\linewidth}
    \includegraphics[width=\linewidth]{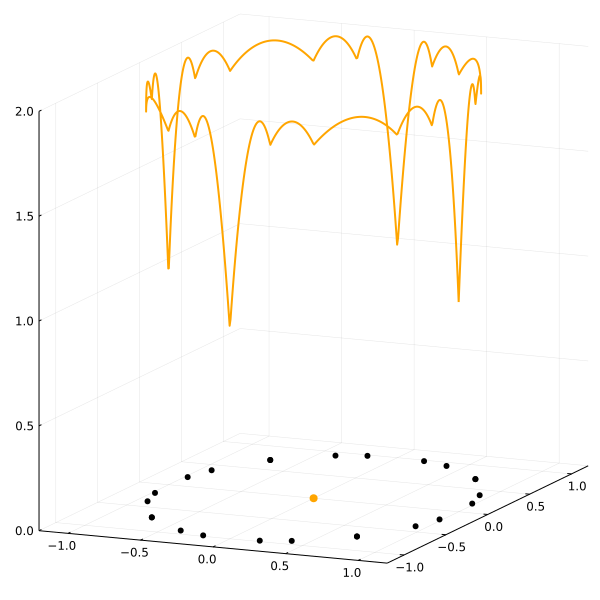}
    \caption{$\underline{\lzero} : \RR^2 \to \RR$ in~\eqref{eq:l0_lower} on $\SPHERE\cup\na{0}$}
    \label{fig:l0_lower}
  \end{subfigure}
  \caption{The $\lzero$ pseudonorm in $\RR^2$ (blue color, Figure~\ref{fig:l0})
    and a ```polyhedral-like''~\cite[p.~114]{Singer:1997} lower bound~$\underline{\lzero}$
    as in~\eqref{eq:l0_lower} represented on $\SPHERE\cup\na{0}$
    (orange color, Figure~\ref{fig:l0_lower})
    obtained for the $\ell_2$ source norm $\TripleNorm{\cdot} = \Norm{\cdot}_2$
    with points
    $\na{\primal_i}_{i\in I}$ sampled on $\SPHERE$  
    (black dots, Figure~\ref{fig:l0_lower})
  }
\end{figure}


\section{Conclusion}

We have derived explicit formulations for the \Capra-subdifferential
of the $\lzero$ pseudonorm for the $\ell_p$ source norms 
with $p \in \nc{1, \infty}$. 
With these formulations, it is now possible to compute 
elements in such \Capra-subdifferentials, 
that we have illustrated by a graphical representation.
On top of that, we have extended previous 
knowledge on $\lzero$, establishing that it is neither 
\Capra-convex nor \Capra-subdifferentiable everywhere 
in the limit cases where $p \in \na{1, \infty}$.

The formulation that we obtain differs drastically from previous
notions of generalized subdifferential for the $\lzero$ pseudonorm.
Whereas most other notions coincide, 
the \Capra-subdifferential enriches this collection
and is an interesting tool to deal with the function $\lzero$, 
in the spirit of the usual notion of subdifferential
for proper lower semicontinuous convex functions.

\paragraph*{Acknowledgement.}{We thank the two anonymous referees that helped us to improve the quality of this paper.}

\appendix

\section{Background on norms}
\label{app:background_on_norms}

For any norm~$\TripleNorm{\cdot}$ on~$\RR^d$,
we introduce derived norms and some of their properties.

\subsubsection*{Dual norms and normal cones}

The following expression 
\begin{equation}
  \TripleNorm{\dual}_\star = 
  \sup_{ \TripleNorm{\primal} \leq 1 } \proscal{\primal}{\dual} 
  \eqsepv \forall \dual \in \RR^d
  \label{eq:dual_norm}
\end{equation}
defines a norm on~$\RR^d$, 
called the \emph{dual norm} \( \TripleNormDual{\cdot} \).
In line with our notations for the norm~$\TripleNorm{\cdot}$ in~\eqref{eq:triplenorm_unit_sphere_ball},
we denote the unit sphere and the unit ball 
of the dual norm~$\TripleNormDual{\cdot}$ by 
\begin{subequations}
  \begin{align}
    \TripleNormDualSphere
    &  = 
      \defset{\dual \in \RR^d}{\TripleNormDual{\dual} = 1} 
      \eqfinv
      \label{eq:triplenorm_Dual_unit_sphere}
    \\
    \TripleNormDualBall
    &  = 
      \defset{\dual \in \RR^d}{\TripleNormDual{\dual} \leq 1} 
      \eqfinp
      \label{eq:triplenorm_Dual_unit_ball}
  \end{align}
\end{subequations}
Note that by definition of the dual norm in~\eqref{eq:dual_norm}, we have 
the inequality
\begin{equation}
  \proscal{\primal}{\dual} \leq 
  \TripleNorm{\primal} \times \TripleNormDual{\dual} 
  \eqsepv \forall  \np{\primal,\dual} \in \RR^d \times \RR^d 
  \eqfinp 
  \label{eq:norm_dual_norm_inequality}
\end{equation}

Equality cases in the above inequality can be characterized in term
of geometric objects of convex analysis. 
For this purpose,
we recall that the \emph{normal cone}~$\NormalCone_{\Convex}(\primal)$ 
to the nonempty closed convex subset~${\Convex} \subseteq \RR^d $
at~$\primal \in \Convex$ is the closed convex cone defined by 
\cite[Definition~5.2.3]{Hiriart-Urruty-Lemarechal:2004}
\begin{equation}
  \NormalCone_{\Convex}(\primal) =
  \bset{ \dual \in \RR^d}
  {
    \proscal{\primal'-\primal}{\dual} \leq 0 \eqsepv 
    \forall \primal' \in \Convex }
  \eqfinp
  \label{eq:normal_cone}
\end{equation}
Now, easy computations show that for any
$\np{\primal,\dual} 
\in \RR^d\backslash\{0\} \times \RR^d\backslash\{0\}$,
we have the equivalence
\begin{equation}
  \proscal{\primal}{\dual} =
  \TripleNorm{\primal} \times \TripleNormDual{\dual} 
  \iff
  \dual \in \NormalCone_{ \TripleNormBall }\bp{ \frac{ \primal }{\TripleNorm{\primal} } }
  \iff
  \primal \in \NormalCone_{ \TripleNormDualBall }\bp{ \frac{ \dual }{
      \TripleNorm{\dual} } } 
  \eqfinp 
  \label{eq:couple_TripleNorm-dual_and_normal_cone} 
\end{equation}

\subsubsection*{Orthant strict monotonicity}

For any \( \primal \in \RR^d \), we denote by \( \module{\primal} \)
the vector of~$ \RR^{d}$ with components $|\primal_i|$, $i=1,\ldots,d$.

\begin{definition}[from \cite{Chancelier-DeLara:2021_SVVA}, Definition~5] 
  A norm \( \TripleNorm{\cdot}\) on the space~\( \RR^d \) is called
  \emph{orthant-strictly monotonic} if, for all 
  $\primal$, $\primal'$ in~$ \RR^{d}$, we have
  \begin{equation}
    \bp{|\primal| < |\primal'| \text{ and } 
      \primal~\circ~\primal' \ge 0} 
    \implies 
    \TripleNorm{\primal} < \TripleNorm{\primal'}
    \eqfinv
  \end{equation} 	
  where \( |\primal| < |\primal'| \) means that 
  $|\primal_i| \le |\primal^{'}_i|$ for all $i=1,\ldots,d$, 
  and that there exists $j \in \na{1,\ldots,d}$, such that
  $|\primal_j| < |\primal^{'}_j|$;
  and $\primal~\circ~\primal' =
  \np{ \primal_1 \primal'_1,\ldots, \primal_d \primal'_d}$
  is the Hadamard (entrywise) product.
  \label{de:orthant-monotonic}%
\end{definition}

\subsubsection*{Restriction norms, coordinate-$k$ and dual coordinate-$k$ norms}

We start by introducing restriction norms and their dual.

\begin{definition}[\cite{Chancelier-DeLara:2022_CAPRA_OPTIMIZATION}, Definition~3.1]   \label{de:K_norm}
  For any norm~$\TripleNorm{\cdot}$ on~$\RR^d$
  and any subset \( K \subseteq \na{1,\ldots,d} \),
  we define two norms on the subspace~\( \FlatRR_{K} \) of~\( \RR^d \),
  as defined in~\eqref{eq:FlatRR}, as follows.
  \begin{itemize}
  \item 
    The \emph{$K$-restriction norm} \( \TripleNorm{\cdot}_{K} \)
    is defined by   
    \begin{equation}
      \TripleNorm{\primal}_{K} = \TripleNorm{\primal} 
      \eqsepv
      \forall \primal \in \FlatRR_{K} 
      \eqfinp 
      \label{eq:K_norm}
    \end{equation}
  \item 
    The $\SetStar{K}$-norm
    \( \TripleNorm{\cdot}_{K,\star} \) is 
    the norm \( \bp{\TripleNorm{\cdot}_{K}}_{\star} \),
    given by the dual norm (on the subspace~\( \FlatRR_{K} \))
    of the restriction norm~\( \TripleNorm{\cdot}_{K} \) 
    to the subspace~\( \FlatRR_{K} \) (first restriction, then dual).
  \end{itemize}
\end{definition}

With these norms, we define the coordinate-$k$ and dual coordinate-$k$ norms.

\begin{definition}[\cite{Chancelier-DeLara:2022_CAPRA_OPTIMIZATION}, Definition~3.2]
  For \( k \in \na{1,\ldots,d} \), we call \emph{coordinate-$k$ norm}
  the norm \( \CoordinateNorm{\TripleNorm{\cdot}}{k} \) 
  whose dual norm is  the 
  \emph{dual coordinate-$k$ norm}, denoted by
  \( \CoordinateNormDual{\TripleNorm{\cdot}}{k} \), 
  with expression
  \begin{equation}
    \CoordinateNormDual{\TripleNorm{\dual}}{k}
    =
    \sup_{\cardinal{K} \leq k} \TripleNorm{\dual_K}_{K,\star} 
    \eqsepv \forall \dual \in \RR^d 
    \eqfinv
    \label{eq:dual_coordinate_norm_definition}
  \end{equation}
  where the $\SetStar{K}$-norm \( \TripleNorm{\cdot}_{K,\star} \) is given in
  Definition~\ref{de:K_norm}, 
  and where the notation \( \sup_{\cardinal{K} \leq k} \) is a shorthand for 
  \( \sup_{ { K \subseteq \na{1,\ldots,d}, \cardinal{K} \leq k}} \).
  \label{de:coordinate_norm}
\end{definition}

Also, following~\cite[\S3.2]{Chancelier-DeLara:2022_CAPRA_OPTIMIZATION}, 
we extend the dual coordinate-$k$
norms in Definition~\ref{de:coordinate_norm}
with the convention
$\CoordinateNormDual{\TripleNorm{\cdot}}{0} = 0$,
although this is not a norm on $\RR^d$ but a seminorm.

\newcommand{\noopsort}[1]{} \ifx\undefined\allcaps\def\allcaps#1{#1}\fi

\end{document}